\DeclareMathOperator*{\argmin}{arg\min}
\title{Relaxed regularization for linear inverse problems
\thanks{Submitted to the editors 24-6-2020.
\funding{NL is financially supported by the DELPHI consortium. TvL is financially supported by the Netherlands Organization for Scientific Research (NWO) as part of research programme 613.009.032. }}
}
\author{
Nick Luiken \thanks{Utrecht University, Mathematical Institute} (\email{n.a.luiken@uu.nl})
\and
Tristan van Leeuwen \thanks{Utrecht University, Mathematical Institute} (\email{t.vanleeuwen@uu.nl})
}
\date{\today}
\begin{document}
\maketitle

\begin{abstract}
	We consider regularized least-squares problems of the form $\min_{x} \frac{1}{2}\Vert Ax - b\Vert_2^2 + \mathcal{R}(Lx)$. Recently, Zheng et al. \cite{Zheng2019} proposed an algorithm called Sparse Relaxed Regularized Regression (SR3) that employs a splitting strategy by introducing an auxiliary variable $y$ and solves $\min_{x,y} \frac{1}{2}\Vert Ax - b\Vert_2^2 + \frac{\kappa}{2}\Vert Lx - y\Vert_2^2 + \mathcal{R}(x)$. By minimizing out the variable $x$, we obtain an equivalent optimization problem $\min_{y} \frac{1}{2} \Vert F_{\kappa}y - g_{\kappa}\Vert_2^2+\mathcal{R}(y)$. In our work, we view the SR3 method as a way to approximately solve the regularized problem. We analyze the conditioning of the relaxed problem in general and give an expression for the SVD of $F_{\kappa}$ as a function of $\kappa$.
	Furthermore, we relate the Pareto curve of the original problem to the relaxed problem and we quantify the error incurred by relaxation in terms of $\kappa$. Finally, we propose an efficient iterative method for solving the relaxed problem with inexact inner iterations. Numerical examples illustrate the approach.\\
\end{abstract}

\begin{keywords}
	Inverse problems, optimization, machine learning, regularization, sparsity, total variation.
\end{keywords}

\begin{AMS}
	65F22, 65F10, 15A18, 15A29
\end{AMS}

\section{Introduction}
Inverse problems are problems where a certain quantity of interest has to be determined from indirect measurements. In medicine, well-known examples include MRI \cite{Liang2000}, CT \cite{Herman2009}, and ultrasound imaging \cite{Besson2019} where the objective is to obtain images of the interior of the human body. In the geosciences, inverse problems arise in seismic exploration and seismology \cite{Virieux2009}, where the interest lies in exploring the elastic properties of the different layers of our planet. Other examples include tomography \cite{Arridge2009,Natterer2001,Bertero1998}, radar imaging \cite{Borden2001}, remote sensing \cite{Tamminen2012,Nolan2002}, astrophysics \cite{Starck2016}, and more recently, machine learning \cite{Goodfellow2016}.

Inverse problems are challenging for a number of reasons. There may be limited data available, or the data may be corrupted by noise. The datasets are generally very large, and the underlying model is generally not well-defined for retrieving the quantity of interest. Therefore, inverse problems often have to be \textit{regularized}, meaning prior information has to be added. They can be posed in the following way:
\begin{equation}
\min_{x} \dfrac{1}{2}\Vert Ax - b\Vert_2^2 +  \mathcal{R}(Lx),
\label{eq:rlsqrs}
\end{equation}
where $A \in \mathbb{R}^{m\times n}$ is the linear forward operator, $\mathcal{R}(\cdot)$ is the regularization term and $L \in \mathbb{R}^{n\times p}$ the regularization operator. The latter two encode the prior information about $x$. In our work, we focus on $\mathcal{R}(\cdot) = \lambda\|\cdot\|_p^p$, or, equivalently, $\mathcal{R}(\cdot) = \delta_{\|\cdot\|_p\leq\tau}(\cdot)$, which is the indicator function of the set $\Vert \cdot\Vert_p \leq \tau$. By equivalent we mean that for every $\tau$ there is a $\lambda$ such that the solutions of the two problems coincide \cite{Aravkin2012e}. A direct solution to the problem above is generally not possible, either because a closed-form solution does not exist, or because evaluating the direct solution is too computationally expensive. Therefore, we have to resort to iterative methods to solve the problem, with most algorithms being designed for specific choices of $p$ and $L$. \\

Traditionally, $p = 2$, called Tikhonov regularization, is a popular choice, because the objective function is differentiable and allows for a closed-form expression of the solution of \cref{eq:rlsqrs} in terms of $A, L$ and $\lambda$. For this class of problems, Krylov based algorithms have been proven very effective \cite{Calvetti2000,Calvetti2003,Hansen2010,Kilmer2001,Golub1997,Zwaan2017,Hochstenbach2010,Hochstenbach2015,Kilmer2007,Gazzola2015}. These methods generally exploit the fact that a closed-form solution exists by constructing a low dimensional subspace from which an approximate solution is extracted.

The choice $p = 1$ has gained popularity in recent years because it gives sparse solutions while still yielding a convex objective. Sparsity is important in a number of applications, like compressed sensing \cite{Candes2006}, seismic imaging \cite{Herrmann2008}, image restoration \cite{Rudin1994}, and tomography \cite{Hansen2011}. However, the objective is no longer differentiable and the aforementioned Krylov methods do not apply. If $L = I$, a proximal gradient method (sometimes referred to as Iterative Soft Tresholding -- ISTA) \cite{Daubechies2004} can be applied, iteratively updating the solution via
\[
	x_{k+1} = \text{prox}_{\alpha\lambda\|\cdot\|_1}\left(x_k - \alpha A^T\!(Ax_{k} - b)\right),
\]
where $\alpha\in (0,\|A\|^2_2)$ is the stepsize and the proximal operator is the soft thresholding operator, which can be efficiently evaluated. Generally, ISTA achieves a sub-linear rate of convergence of $\mathcal{O}(1/k)$ (unless $m \geq n$ and $A$ has full rank, in which case we have a linear rate of convergence).
FISTA (Fast Iterative Soft Thresholding Algorithm) \cite{Beck2009} is a faster version of ISTA that generally achieves a sublinear rate of $\mathcal{O}(1/k^2)$.\\

If $L = I$ the optimization problem is said to be in standard-form and for any other $L$ the algorithm is in general form. If $L$ is full-rank and has no nullspace, the optimization problem can be put into standard-form via the change of variables $y = Lx$. Instead of the matrix $A$, we get $AL^{\dagger}$. In such cases we can apply the (F)ISTA method directly at the expense of having to evaluate $L^\dagger$. In some applications, we have $L^\dagger = L^T$ (e.g., when $L$ is a tight frame). If $L$ has a non-trivial nullspace the algorithm can still be put in standard-form by the standard-from transformation \cite{Elden1982,Hansen2011}, but this is nontrivial, because the nullspace has to be accounted for. \\

If $L \neq I$, and we cannot easily transform the problem to standard form, the proximal operator is no longer easy to evaluate in general and FISTA may no longer be attractive. An example of this class of problems is Total Variation (TV) regularization, where $L$ is the discretization of the gradient, which gives blocky solutions. A popular algorithm for this class of problems is the Alternating Direction Method of Multipliers, ADMM \cite{Boyd2011}. ADMM solves \cref{eq:rlsqrs} by forming the \textit{augmented Lagrangian}
\[
	\min_{x,y}\max_z \dfrac{1}{2}\Vert Ax - b\Vert_2^2 + \lambda\Vert y\Vert_p^p + z^T\left(Lx - y\right) + \dfrac{\rho}{2} \Vert Lx - y\Vert_2^2,
\]
and alternatingly minimizing over the variables $x$ and $y$, and the Lagrange multiplier $z$. The strength of ADMM is that it can closely approximate the solution of any convex sparse optimization problem. However, convergence can be slow \cite{Boyd2011}. \\

If $p < 1$, the emphasis on sparsity of the solution is stronger than for the case $p = 1$. However, the objective function is no longer convex which makes it more difficult to solve. \\

Recently, a unifying algorithm was proposed that allows the efficient approximation of the solution of any problem of the form \cref{eq:rlsqrs}, called Sparse Relaxed Regularized Regression (SR3) \cite{Zheng2019}. This algorithm makes use of a splitting strategy by introducing an auxiliary variable $y$ and yields:
\begin{equation}
	\min_{x,y} \dfrac{1}{2}\|Ax - b\|_2^2 + \dfrac{\kappa}{2}\|Lx - y\|_2^2 + \mathcal{R}(y).
	\label{eq:relaxed}
\end{equation}
By minimizing out $x$, we obtain a new optimization problem of the form:
\begin{equation}
	\bar{y}_{\kappa} = \argmin_{y}\dfrac{1}{2}\| F_{\kappa}y - g_{\kappa}\|_2^2 + \mathcal{R}(y),
\end{equation}
where $F_{\kappa} = \left(\begin{matrix}\kappa^{1/2}\left(I - \kappa LH_{\kappa}^{-1}L^T\right)  \\ \kappa AH_{\kappa}^{-1}L^T \end{matrix}\right)$ and $g_{\kappa} = \left(\begin{matrix}\kappa^{1/2}LH_{\kappa}^{-1}A^Tb \\ b - AH_{\kappa}^{-1}A^Tb \end{matrix}\right)$, $H_{\kappa} = A^T\!A + \kappa L^T\!L$. The solution to \eqref{eq:relaxed} is then given by
\begin{equation}
	\bar{x}_{\kappa} = H_{\kappa}^{-1}\left( \kappa L^T\bar{y}_{\kappa} + A^Tb \right).
\label{eq:SR3x}
\end{equation}
This solution is then used as an approximation of the solution of \eqref{eq:rlsqrs}. In \cite{Zheng2019} the particular case with $L^TL = I$ is analyzed. Using the SVD of $A$, the singular values of $F_{\kappa}$ were calculated, showing a relation between the condition number of $F_{\kappa}$ and $A$ depending on $\kappa$. In short, the result shows that a small $\kappa$ improves the conditioning of $F_{\kappa}$ and as $\kappa\to \infty$ the condition numbers are the same, because the original optimization problem is obtained.

For the implementation of SR3, it is not necessary to form the operator $F_{\kappa}$, as was shown in \cite{Zheng2019}. The authors propose the following algorithm for solving the relaxed problem
\begin{eqnarray}
	x_{k+1} &\leftarrow& \left(A^T\!A + \kappa L^T\!L\right)^{-1} \left(A^Tb + \kappa L^Ty_k\right) \\
	y_{k+1} &\leftarrow& \text{prox}_{\alpha\mathcal{R}} \left(y_k - \alpha\kappa(y_k - Lx_{k+1}) \right),
\end{eqnarray}
which for the particular choice $\alpha = 1/\kappa$ simplifies to
\begin{align}
	x_{k+1} &\leftarrow \left(A^T\!A + \kappa L^T\!L\right)^{-1} \left(A^Tb + \kappa L^Ty_k\right) \label{eq:implementation_step1}  \\
	y_{k+1} &\leftarrow \text{prox}_{1/\kappa \mathcal{R}} \left(Lx_{k+1}\right). \label{eq:implementation_step2}
\end{align}
This method has several advantages when applied to solving inverse problems that we highlight in the examples below.

\subsection{Motivating examples}
\label{motivating_examples}
Below we show some typical examples encountered in various areas of science to which SR3 can be applied. The problems we tackle are of the form
\begin{equation}
\min_x \dfrac{1}{2}\|Ax - b\|_2^2 \quad \text{s.t.} \quad \|Lx\|_1 \leq \tau.
\label{eq:lstau}
\end{equation}
The main tasks are to solve this for a given value of $\tau$ and to find an appropriate value of $\tau$. The latter is achieved by picking the corner of the Pareto curve (sometimes called the L-curve) $\phi(\tau) = \min_{\|x\|_p\leq \tau}\|Ax - b\|_2$.
Comparing a proximal gradient method to SR3, we show the residual as a function of $\tau$, the optimal reconstruction, and the convergence history in terms of the primal-dual gap. These examples show two favourable aspects of SR3 over the conventional proximal gradient method: \emph{i)} SR3 converges (much) faster for any fixed value of $\tau$ and \emph{ii)} the corners of both Pareto-curves coincide, allowing us to effectively use SR3 to estimate $\tau$.

\subsubsection*{Spiky deconvolution ($m = n$, $L = I$)}
Consider a deconvolution problem where $A$ is a Toeplitz-matrix that convolves the input with a bandlimited function;
\[
a_{ij} = w(t_i - t_j),
\]
where $w(t) = (1 - (t/\sigma)^2)e^{-(t/\sigma)^2} $ and $t_i = i\cdot h$. We take $n = 101$, $h = 1/n$ and $\sigma = 0.05$. The results are shown in figure \ref{fig:example1}.
\begin{figure}
\centering
\includegraphics[scale=.4]{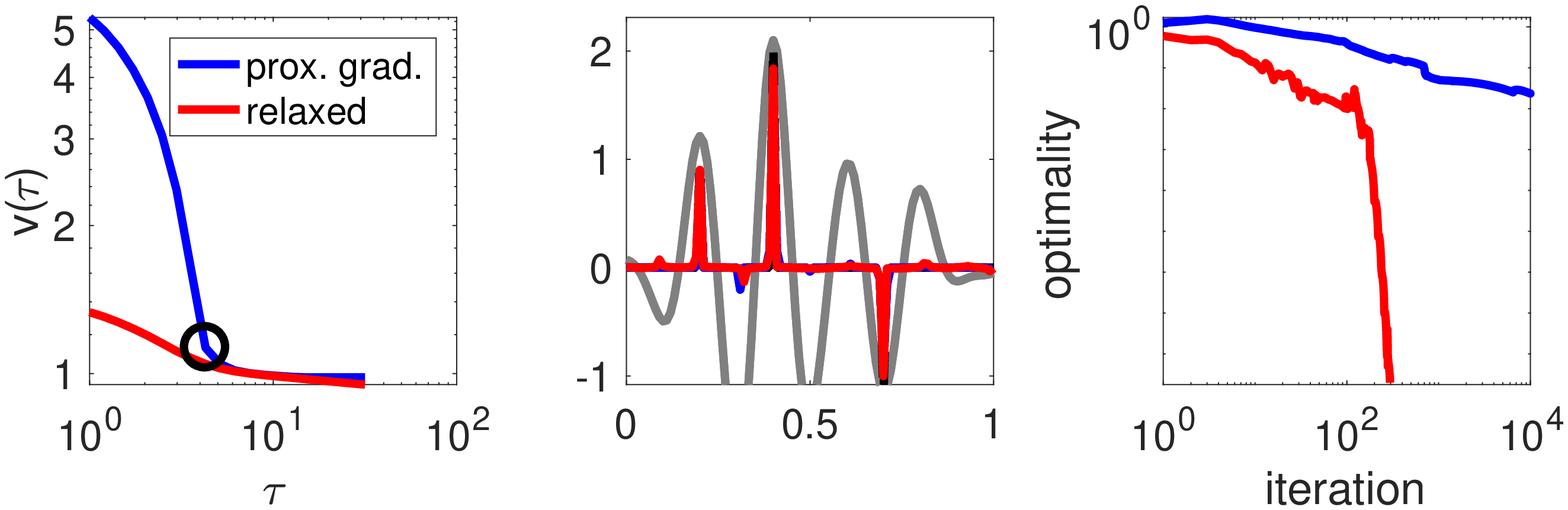}
\label{fig:example1}
\caption{Spiky deconvolution example. The left figure shows the Pareto curve, the middle figure shows the solution and the right figure shows the primal-dual gap as a function of the number of iterations. The grey line in the middle figure shows the minimum norm solution.}
\end{figure}

\subsubsection*{Compressed sensing ($m < n$, $L = I$)}
Here, the goal is to recover a sparse signal from compressive samples. The forward operator is a random matrix with i.i.d. normally distributed entries. We take $n = 101$ and $m = 20$. The results are shown in figure \ref{fig:example2}.

\begin{figure}
\centering
\includegraphics[scale=.4]{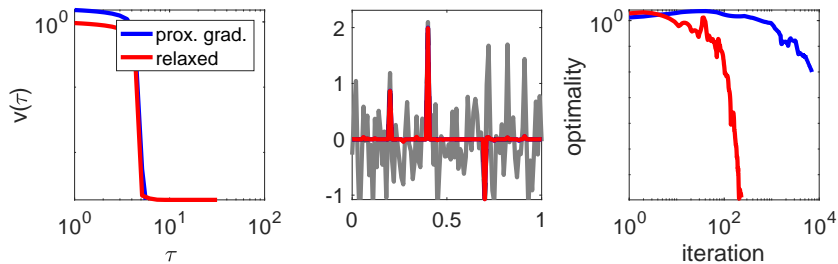}
\label{fig:example2}
\caption{Compressed sensing. A signal is reconstructed from very few samples, which requires sparse regularization. The left figure shows the Pareto curve, the middle figure shows the solution and the right figure shows the primal-dual gap as a function of the number of iterations. The grey line in the middle figure shows the minimum norm solution.}
\end{figure}

\subsubsection*{Total variation ($m = n$, $L = D$)}
Consider a deconvolution problem where $A$ is a Toeplitz-matrix that convolves the input with a bandlimited function;
\[
a_{ij} = w(t_i - t_j),
\]
where $w(t) = e^{-(t/\sigma)^2}$ and $t_i = i\cdot h$. $L$ is a finite-difference discretization of the first-order derative with Neumann boundary conditions. We take $n = 101$, $h = 1/n$ and $\sigma = 0.05$. The results are shown in figure \ref{fig:example3}.
\begin{figure}
\centering
\includegraphics[scale=.4]{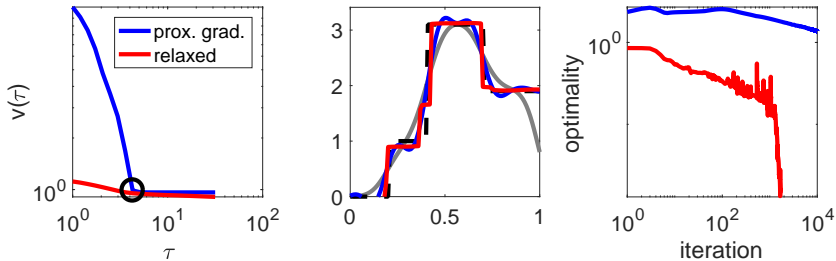}
\label{fig:example3}
\caption{Total variation example. Here, the solution has a blocky structure. The left figure shows the Pareto curve, the middle figure shows the solution and the right figure shows the primal-dual gap as a function of the number of iterations. The grey line in the middle figure shows the minimum norm solution.}
\end{figure}

\subsection{Contributions} In this paper we set out to further analyze the SR3 method proposed in \cite{Zheng2019} and analyze in detail the observations made in the above examples. Our contributions are:

\begin{description}
\item[Conditioning of $F_{\kappa}$ for general $L$.] We extend the analysis of \cite{Zheng2019} and derive the SVD of $F_{\kappa}$ for general $L$. We show how the singular values and the condition number of $F_{\kappa}$ are related to the generalized singular values of $(A,L)$. As a by-product, we show that SR3 implicitly makes a standard-form transformation {\cite{Elden1982}} of \cref{eq:rlsqrs}.

\item[Approximation of the Pareto-curve.] We show that that the Pareto curve corresponding to the relaxed problem \eqref{eq:relaxed} always underestimates the Pareto curve of the original problem \eqref{eq:rlsqrs} and that the error is of order $\mathcal{O}(\kappa^{-2})$. A by-product of this result is a better understanding of the Pareto curve for general $p$ and an intuitive explanation of the observation that the corners of the relaxed original Pareto curves coincide.

\item[Inexact solves.] We propose an inexact inner-outer iterative version of the SR3 algorithm where the regularized least-squares problem \cref{eq:implementation_step1} is solved approximately using a Krylov-subspace method. In particular, we propose an automated adaptive stopping criterion for the inner iterations.

\end{description}

\subsection{Outline}
In \cref{sec:analysis} we analyze the operator $F_{\kappa}$. We derive the SVD of $F_{\kappa}$ and analyse the limiting cases $\kappa\to \infty$ and $\kappa\to 0$. Our main results are a characterization of the singular values of $F_{\kappa}$ and showing that SR3 implicitly applies a standard-form transformation. In \cref{sec:value_function}, we relate the Pareto curve of SR3 to the Pareto curve of the original problem and derive an error bound in terms of $\kappa$. Next, \cref{sec:implementation} is concerned with the implementation of SR3. We propose two ingredients that make SR3 suitable for large-scale applications. In \cref{sec:experiments}, we conduct our numerical experiments and verify the theoretical results from \cref{sec:analysis}. Moreover, we numerically investigate the influence of $\kappa$ on the convergence rate. Finally, in \cref{sec:conclusion}, we draw our conclusions.

\section{Analysis of SR3}
\label{sec:analysis}
In this section we analyze some of the properties of the operator $F_{\kappa}$. We will characterize the singular values of $F_{\kappa}$ for general $L$ and analyse the limits $\kappa\to 0$ and $\kappa\to \infty$. First, we will treat some preliminaries needed for understanding what happens in the limit $\kappa\to\infty$.

\subsection{The Generalized Singular Value Decomposition}
The central tool in our analysis is the Generalized Singular Value Decomposition (GSVD) of $(A,L)$. The definition of the GSVD depends on the size of the matrices and the dimensions of the matrices relative to each other. We use the definitions for the case $A\in\mathbb{R}^{m\times n}$ and $L\in\mathbb{R}^{p\times n}$ where $m \geq n$, $p < n$ or $m < n$, $p > n$ because this corresponds to the examples we use in our experiments. 
\begin{definition}[GSVD]
	Let $A\in\mathbb{R}^{m\times n}$ and $L\in\mathbb{R}^{p\times n}$. The Generalized Singular Value Decomposition (GSVD) of $(A,L)$ is given by $A = U\Sigma X$, $L = V\Gamma X$, where
	$$\Sigma = \begin{bmatrix} \Sigma_p & 0 \\ 0 & I_{n-p} \\ 0 & 0\end{bmatrix}, \, \quad \Gamma = \begin{bmatrix} \Gamma_p & 0 \end{bmatrix} \quad \text{for} \quad m \geq n, p \leq n,$$
	and
	$$\Sigma = \begin{bmatrix} 0 & \Sigma_m \end{bmatrix}, \, \quad \Gamma = \begin{bmatrix} I_{n-m} & 0 \\ 0 & \Gamma_m \\ 0 & 0\end{bmatrix} \quad \text{for}\quad  m < n, p > n.$$
	The matrices $\Sigma_r$ and $\Gamma_r$ (where $r = p$ or $r = m$) are $r\times r$ diagonal matrices satisfying $\Sigma_r^T\Sigma_r + \Gamma_r^T\Gamma_r = I_r$, $X$ is invertible and $U$ and $V$ are orthonormal. Moreover, we have the following ordering of the diagonal elements $\sigma_i$ of $\Sigma$ and $\gamma_i$ of $\Gamma$:
\begin{eqnarray*}
	& 0 \leq \gamma_{r} \leq \ldots \leq \gamma_1 \leq 1, \\
	& 0 \leq \sigma_{1} \leq \ldots \leq \sigma_r \leq 1.
\end{eqnarray*}
\end{definition}

The decomposition of $A$ and $L$ in the GSVD share similar properties to the SVD. The number of nonzero entries of $\Sigma$ and $\Gamma$ give the rank of $A$ and $L$ respectively. If $r_{A}$ is the rank of $A$ and $r_{L}$ is the rank of $L$ then the last $r - r_{A}$ columns, corresponding to $\Sigma_r$, of $U$ form a basis for the range of $A$ and the first $r_{L}$ columns, corresponding to $\Gamma_r$, of $V$ form a basis for the range of $L$. The first $r - r_{A}$ columns, corresponding to $\Sigma_r$, of $X^{-1}$ form a basis for the nullspace of $A$ and the last $r - r_{L}$ columns, corresponding to $\Gamma_r$, of $X^{-1}$ form a basis for the nullspace of $L$.

\subsection{Standard-form transformation}
The standard-form transformation, see e.g. \cite{Elden1982,Hansen1998}, makes a substitution $y = Lx$ such that $x = x_{\mathcal{M}} + x_{\mathcal{N}}$, where
\begin{equation}
	\bar{x}_{\mathcal{M}} = L_A^{\dagger}\bar{y}, \quad \bar{y} = \argmin_{y} \dfrac{1}{2}\Vert AL_{A}^{\dagger}y - b \Vert_2^2 + \mathcal{R}(y), \quad L_{A}^{\dagger} = \left(I - (A(I-L^{\dagger}L))^{\dagger}A\right)L^{\dagger}.
\label{eq:xM1}
\end{equation}
and
\begin{equation}
	\bar{x}_{\mathcal{N}} = \left( A\left(I - L^{\dagger}L\right)\right)^{\dagger}b.
\label{eq:xN1}
\end{equation}
The operator $L_{A}^{\dagger}$ is called the \textit{A-weighted pseudo-inverse}. The transformation splits the solution into two parts: one part in the range of $L$, $L_A^{\dagger}y$, and one part in the nullspace of $L$, $x_{\mathcal{N}}$. The operator $L_{A}^{\dagger}$ makes the two parts $A$-orthogonal. The parts $L_A^{\dagger}y$ and $x_{\mathcal{N}}$ are then obtained by two independent optimization problems. If $L$ is invertible $L_{A}^{\dagger} = L^{-1}$ and if $p > n$ and $L$ has full rank we have $L_{A}^{\dagger} = L^{\dagger}$. Hence, if $L^TL = I$, the standard- form is achieved by simply applying $L^T$. \\
In terms of the GSVD of $(A,L)$, the standard-form transformation has a much simpler form. The operator $L_{A}^{\dagger}$ can be written in terms of the GSVD as
\[
	L_{A}^{\dagger} = X^{-1}\Gamma^{\dagger}V^T,
\]
and hence \cref{eq:xM1} can be written as
\begin{equation}
	\bar{x}_{\mathcal{M}} = X^{-1}\Gamma^{\dagger}V^T\bar{y}, \quad \bar{y} = \argmin_{y}  \dfrac{1}{2}\Vert U\Sigma\Gamma^{\dagger}V^Ty - b\Vert_2^2 + \mathcal{R}(y).
\label{eq:xR}
\end{equation}
Similarly, \cref{eq:xN1} can be written in terms of the GSVD as
\begin{equation}
	\bar{x}_{\mathcal{N}} = X^{-1}\begin{bmatrix} 0 & 0 \\ 0 & I_{p-r_L} \end{bmatrix}U^Tb.
\label{eq:xN}
\end{equation}

\subsection{The SVD of $F_{\kappa}$}
In this section we derive the SVD of $F_{\kappa}$ in terms of the GSVD of $(A,L)$.
\begin{theorem}
	Let $F_{\kappa} = Y\Lambda Z^T$ be the SVD of $F_{\kappa}$. Let the GSVD of $\begin{bmatrix} A \\ L \end{bmatrix} = \begin{bmatrix} U\Sigma \\ V\Gamma \end{bmatrix}X$. Then
\begin{eqnarray*}
	Y & = & \begin{bmatrix} \kappa^{1/2}V\widetilde{\Sigma}_{\kappa,I}^{1/2} & \kappa V \widetilde{\Sigma}_{\kappa,I}^{-1/2} \Gamma\left( \Sigma^T\Sigma + \kappa\Gamma^T\Gamma\right)^{-1}\Sigma^T \\ \kappa U\Sigma\left( \Sigma^T\Sigma + \kappa\Gamma^T\Gamma\right)^{-1}\Gamma^T\widetilde{\Sigma}_{\kappa,I}^{-1/2} & -\kappa^{-1/2}U\widetilde{\Sigma}_{m,\kappa}^{1/2} \end{bmatrix} \\
	\Lambda & = & \begin{bmatrix} \widetilde{\Sigma}_{\kappa }^{1/2} \\ 0\end{bmatrix}\\
	Z & = & V,
\end{eqnarray*}
where $\widetilde{\Sigma}_{\kappa}= \kappa\left(I_p - \kappa\Gamma(\Sigma^T\Sigma + \kappa\Gamma^T\Gamma)^{-1}\Gamma^T\right)$, $\widetilde{\Sigma}_{\kappa,m} = \begin{bmatrix} \widetilde{\Sigma}_{\kappa } & 0 \\ 0 & I_{m-p}\end{bmatrix}$ if $m \geq n \geq p$ and $\widetilde{\Sigma}_{\kappa,m} = \widetilde{\Sigma}_{\kappa,I}$ if $m < n \leq p$, and the square root denotes the entry wise square root. If $p > n$ the diagonal matrix $\widetilde{\Sigma}_{\kappa}$ will have zeros on the diagonal. We denote $\widetilde{\Sigma}_{\kappa,I}$ to be the matrix $\widetilde{\Sigma}_{\kappa}$ where the zeros have been replaced by ones.
\end{theorem}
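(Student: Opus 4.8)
The plan is to insert the GSVD into $F_\kappa$, let the invertible factor $X$ telescope away, and then read off the three factors $Y,\Lambda,Z$. First I would reduce $H_\kappa$. Writing $A=U\Sigma X$, $L=V\Gamma X$ and setting $D_\kappa:=\Sigma^T\Sigma+\kappa\Gamma^T\Gamma$ (diagonal), we have $H_\kappa=A^TA+\kappa L^TL=X^TD_\kappa X$, so $H_\kappa^{-1}=X^{-1}D_\kappa^{-1}X^{-T}$. Substituting this into the two blocks of $F_\kappa$, every internal $XX^{-1}$ cancels and leaves
\begin{equation*}
LH_\kappa^{-1}L^T=V\Gamma D_\kappa^{-1}\Gamma^TV^T,\qquad AH_\kappa^{-1}L^T=U\Sigma D_\kappa^{-1}\Gamma^TV^T .
\end{equation*}
Hence
\begin{equation*}
F_\kappa=\begin{bmatrix}\kappa^{1/2}V\bigl(I_p-\kappa\Gamma D_\kappa^{-1}\Gamma^T\bigr)V^T\\ \kappa U\Sigma D_\kappa^{-1}\Gamma^TV^T\end{bmatrix}=MV^T ,
\end{equation*}
where $M$ is the bracketed matrix with the trailing $V^T$ stripped off. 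Since $V$ is orthogonal this already gives $Z=V$.

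Next I would obtain the singular values from $F_\kappa^TF_\kappa=V(M^TM)V^T$. Using $U^TU=I$, $V^TV=I$ and the diagonal entries $\sigma_i,\gamma_i$ of the GSVD,
\begin{equation*}
M^TM=\kappa^{-1}\widetilde{\Sigma}_\kappa^{2}+\kappa^{2}\Gamma D_\kappa^{-1}\Sigma^T\Sigma D_\kappa^{-1}\Gamma^T ,
\end{equation*}
and each diagonal entry collapses to $\kappa\sigma_i^2/(\sigma_i^2+\kappa\gamma_i^2)$, i.e. to the corresponding entry of $\widetilde{\Sigma}_\kappa=\kappa(I_p-\kappa\Gamma D_\kappa^{-1}\Gamma^T)$. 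Thus $F_\kappa^TF_\kappa=V\widetilde{\Sigma}_\kappa V^T$, which pins down $\Lambda=\begin{bmatrix}\widetilde{\Sigma}_\kappa^{1/2}\\0\end{bmatrix}$ and reconfirms $Z=V$.

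Then I would recover the left vectors. On the directions where $\widetilde{\Sigma}_\kappa$ is nonzero the left vectors are forced, $Y_1=M\widetilde{\Sigma}_\kappa^{-1/2}$; the square root is singular exactly where $\sigma_i=0$ (and, when $p>n$, on an extra zero block), which is why one replaces those zeros by ones through $\widetilde{\Sigma}_{\kappa,I}$ to obtain the first block column of $Y$. The remaining $m$ columns $Y_2$ must span the orthogonal complement of $\mathrm{range}(F_\kappa)$; the claimed expression places the columns of $U$ lying outside $\mathrm{range}(A)$ there, together with a correction built from $\Sigma D_\kappa^{-1}\Gamma^T$, the minus sign being fixed by orthogonality. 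I would finish by verifying $Y^TY=I$ block by block — $Y_1^TY_1=I_p$, $Y_2^TY_2=I_m$, $Y_1^TY_2=0$ — each of which reduces entrywise to the identity $\sigma_i^2/(\sigma_i^2+\kappa\gamma_i^2)+\kappa\gamma_i^2/(\sigma_i^2+\kappa\gamma_i^2)=1$ once the $\widetilde{\Sigma}_{\kappa,I}^{\pm1/2}$ factors cancel against $M$, and by recombining $Y_1\widetilde{\Sigma}_\kappa^{1/2}=M$ to confirm $F_\kappa Z=Y\Lambda$.

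The telescoping in the first step and the diagonal bookkeeping in the second are mechanical once the GSVD is available; the hard part will be the third step, specifically the degenerate directions. The matrix $\widetilde{\Sigma}_\kappa$ genuinely carries zeros on its diagonal — wherever $A$ has a nullspace direction that survives $L$, and throughout the extra block when $p>n$ — and in those directions the left singular vector is undetermined by $M$ and must be chosen to complete an orthonormal basis. Tracking these directions correctly (the $I_{n-p}$ and $I_{n-m}$ padding of $\Sigma,\Gamma$, the columns of $U$ outside $\mathrm{range}(A)$, and the normalisation of the padding block in $\widetilde{\Sigma}_{\kappa,m}$) is the delicate bookkeeping, and it is also where the two regimes $m\ge n\ge p$ and $m<n\le p$ must be treated separately. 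I would organise the orthogonality check so that $\widetilde{\Sigma}_{\kappa,I}$ absorbs the zeros in the determined block while a separate orthonormal padding covers the free block.
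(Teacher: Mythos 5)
Your proposal follows essentially the same route as the paper: insert the GSVD so that $X$ telescopes out of $H_\kappa^{-1}$, read off $\Lambda$ and $Z=V$ from a Gram matrix of $F_\kappa$ (you use $F_\kappa^TF_\kappa=\kappa(I-\kappa LH_\kappa^{-1}L^T)=V\widetilde{\Sigma}_\kappa V^T$, while the paper equivalently diagonalizes $LH_\kappa^{-1}L^T$ and then works with $F_\kappa F_\kappa^T$), and recover the determined columns of $Y$ as $F_\kappa V\Lambda^{\dagger}$ with the degenerate directions completed via orthogonality --- precisely the paper's block-solve of $F_\kappa F_\kappa^T=Y\Lambda\Lambda^TY^T$ followed by $YY^T=I$. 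Your diagonal identity $\kappa^{-1}\widetilde{\Sigma}_\kappa^2+\kappa^2\Gamma D_\kappa^{-1}\Sigma^T\Sigma D_\kappa^{-1}\Gamma^T=\widetilde{\Sigma}_\kappa$ checks out, and the bookkeeping you flag for the zero and $\sqrt{\kappa}$ entries is exactly what the paper handles with the $\widetilde{\Sigma}_{\kappa,I}$ and $\widetilde{\Sigma}_{\kappa,m}$ devices.
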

\begin{proof}
		Using the GSVD of $(A,L)$ we have $H_{\kappa }^{-1} = X^{-1}(\Sigma^T\Sigma + \kappa\Gamma^T\Gamma)^{-1}X^{-T}$ and hence $LH_{\kappa }^{-1}L^T = V\Gamma(\Sigma^T\Sigma + \kappa\Gamma^T\Gamma)^{-1}\Gamma^TV^T$. Given the fact that $V$ is orthonormal and $\Gamma(\Sigma^T\Sigma + \kappa\Gamma^T\Gamma)^{-1}\Gamma^T$ is a diagonal matrix the above expression is the SVD of $LH_{\kappa }^{-1}L^T$ and we obtain the expressions for $\Lambda$ and $Z$. To obtain $Y$, we first partition $Y = \begin{bmatrix} Y_{11} & Y_{12} \\ Y_{21} & Y_{22} \end{bmatrix}$. We have
		\begin{eqnarray*}
			F_{\kappa }F_{\kappa }^T  & = & Y\Lambda\Lambda^TY^T \\
		& \Longleftrightarrow & \begin{bmatrix} \kappa \left(I - \kappa  LH_{\kappa }^{-1}L^T\right)^2 & \kappa\sqrt{\kappa} \left(I - \kappa LH_{\kappa }^{-1}L^T\right)LH_{\kappa }^{-1}A^T \\
		 \kappa\sqrt{\kappa} AH_{\kappa }^{-1}L^T\left(I - \kappa LH_{\kappa}^{-1}L^T\right) & \kappa^{2} AH_{\kappa }^{-1}LL^TH_{\kappa }^{-1}A^T \end{bmatrix}  \\
		& = & \begin{bmatrix} Y_{11} & Y_{12} \\ Y_{21} & Y_{22} \end{bmatrix}\begin{bmatrix} \widetilde{\Sigma}_{\kappa} & 0 \\ 0 & 0 \end{bmatrix}\begin{bmatrix} Y_{11}^T & Y_{21}^T \\ Y_{12}^T & Y_{22}^T \end{bmatrix} = \begin{bmatrix}
		Y_{11}\widetilde{\Sigma}_{\kappa}Y_{11}^T & Y_{11}\widetilde{\Sigma}_{\kappa}Y_{21}^T  \\ Y_{21}\widetilde{\Sigma}_{\kappa}Y_{11}^T & Y_{21}\widetilde{\Sigma}_{\kappa}Y_{21}^T,
	\end{bmatrix}.
		\end{eqnarray*}
Plugging in the GSVD gives
\[
	F_{\kappa }F_{\kappa }^T = \begin{bmatrix}
		\kappa^{-1} V\widetilde{\Sigma}_{\kappa}^2V^T & \sqrt{\kappa} V\widetilde{\Sigma}_{\kappa}\Gamma(\Sigma^T\Sigma+\kappa\Gamma^T\Gamma)^{-1}\Sigma^TU^T \\
		\sqrt{\kappa} U\Sigma(\Sigma^T\Sigma+\kappa\Gamma^T\Gamma)^{-1}\Gamma^T\widetilde{\Sigma}_{\kappa}V^T & \kappa^{2} U\Sigma(\Sigma^T\Sigma+\kappa\Gamma^T\Gamma)^{-1}\Gamma^T\Gamma(\Sigma^T\Sigma+\kappa\Gamma^T\Gamma)^{-1}\Sigma^TU^T
	\end{bmatrix}.
\]
Solving for $Y_{11}$ gives:
\[
	Y_{11} = \kappa^{-1/2}V\widetilde{\Sigma}_{\kappa,I}^{1/2}.
\]
Using this in the upper right part gives:
\[
	Y_{21} = \kappa U\Sigma( \Sigma^T\Sigma + \kappa\Gamma^T\Gamma)^{-1}\Gamma^T\widetilde{\Sigma}_{\kappa,I}^{-1/2}.
\]
To solve for $Y_{12}$ and $Y_{22}$, we use
\[
	YY^T = \begin{bmatrix} Y_{11}Y_{11}^T + Y_{12}Y_{12}^T & Y_{11}Y_{21}^T + Y_{12}Y_{22}^T \\ Y_{21}Y_{11}^T + Y_{22}Y_{12}^T & Y_{21}Y_{21}^T + Y_{22}Y_{22}^T \end{bmatrix} = \begin{bmatrix} I_p & 0 \\ 0 & I_m \end{bmatrix}.
\]
The upper left part yields
\[
	Y_{12} = \kappa V \widetilde{\Sigma}_{\kappa,I}^{-1/2} \Gamma\left( \Sigma^T\Sigma+ \kappa\Gamma^T\Gamma\right)^{-1}\Sigma^T.
\]
The upper right part yields
\[
	Y_{22} = -\kappa^{-1/2}U\widetilde{\Sigma}_{\kappa,m}^{1/2}.
\]
\end{proof}
Note that the singular values are ordered in ascending order. We have the following corollary.
\begin{corollary}
	If $m\geq n$ and $p<n$ the singular values of $F_{\kappa}$ are given by
	\[
		\psi_i(F_{\kappa}) = \sqrt{\dfrac{\sigma_{n-i+1}^2}{\sigma_{n-i+1}^2/\kappa + \gamma_{n-i+1}^2}}.
	\]
	If $m < n$ and $p > n$ the singular values of $F_{\kappa}$ are given by
	\[
		\psi_i(F_{\kappa}) = \begin{cases} \sqrt{\kappa} & \text{ if } i \leq p - r_{L} \\ 
		\sqrt{\dfrac{\sigma_{m-i+1}^2}{\sigma_{m-i+1}^2/\kappa + \gamma_{m-i+1}^2}} & \text{ if } p - r_{L} < i \leq p - r_{L} + r_A \\  
		0 & \text{ if } i > p - r_{L} + r_A	
		\end{cases}
	\]
\end{corollary}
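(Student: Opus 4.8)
The plan is to read the singular values directly off the factor $\Lambda$ produced by the preceding theorem, since the corollary is nothing more than an explicit evaluation of the diagonal of $\widetilde{\Sigma}_{\kappa}^{1/2}$. From $F_{\kappa} = Y\Lambda Z^{T}$, where $\Lambda$ stacks $\widetilde{\Sigma}_{\kappa}^{1/2}$ over a zero block, the singular values are exactly the square roots of the diagonal entries of $\widetilde{\Sigma}_{\kappa} = \kappa\bigl(I_{p} - \kappa\Gamma(\Sigma^{T}\Sigma + \kappa\Gamma^{T}\Gamma)^{-1}\Gamma^{T}\bigr)$. So the whole argument reduces to two steps: simplifying these diagonal entries, and then sorting and counting them.

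For the first step I would use that the GSVD puts $\Sigma^{T}\Sigma$ and $\Gamma^{T}\Gamma$ in diagonal form, so $\Sigma^{T}\Sigma + \kappa\Gamma^{T}\Gamma$ is diagonal and invertible and $\Gamma(\Sigma^{T}\Sigma + \kappa\Gamma^{T}\Gamma)^{-1}\Gamma^{T}$ is diagonal with entries $\gamma_{i}^{2}/(\sigma_{i}^{2} + \kappa\gamma_{i}^{2})$. Substituting and simplifying $\kappa\bigl(1 - \kappa\gamma_{i}^{2}/(\sigma_{i}^{2}+\kappa\gamma_{i}^{2})\bigr) = \sigma_{i}^{2}/(\sigma_{i}^{2}/\kappa + \gamma_{i}^{2})$ gives the $i$-th diagonal entry at once, and taking square roots yields the stated expression. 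In the case $m\geq n$, $p<n$ every generalized pair satisfies $\sigma_{i}^{2} + \gamma_{i}^{2} = 1$, so these account for all $p$ singular values and the formula is immediate. The reindexing $n-i+1$ and the ordering claim just record the monotonicity of $s\mapsto s/(s/\kappa + 1 - s)$, which is increasing in $s = \sigma_{i}^{2}$; since the GSVD orders $\sigma_{1}\leq\cdots\leq\sigma_{p}$, the singular values inherit a monotone order.

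The substantive part is the case $m<n$, $p>n$, where both $A$ and $L$ may be rank deficient, and I expect the bookkeeping of the three regimes to be the main obstacle. Here I would classify the diagonal entries of $\widetilde{\Sigma}_{\kappa}$ by the vanishing of $\sigma_{i}$ and $\gamma_{i}$. Whenever $\gamma_{i}=0$ — covering both the $p-n$ identically zero rows of $\Gamma$ and the vanishing entries inside $\Gamma_{m}$ — the entry collapses to $\kappa(1-0)=\kappa$, giving a singular value $\sqrt{\kappa}$, and counting these against the rank of $L$ shows there are exactly $p-r_{L}$ of them. Whenever $\sigma_{i}=0$ the entry is $0$, and when both are nonzero we recover the intermediate formula, now tied to the $r_{A}$ nonzero generalized values of $A$. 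Collecting the three groups and sorting them produces the piecewise expression, with the block boundaries $p-r_{L}$ and $p-r_{L}+r_{A}$ coming directly from these counts.

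I would carry out the steps in this order: first the diagonalization and algebraic simplification, which is valid in both cases; then the clean case $m\geq n,\,p<n$; and finally the rank-deficient case, where the care needed is entirely in matching the number of $\sqrt{\kappa}$, intermediate, and zero singular values to $p-r_{L}$, $r_{A}$, and the remainder, and in fixing the ordering convention so that the indices $n-i+1$ and $m-i+1$ line up. The only genuinely delicate point is confirming that the degenerate pairs coming from the null spaces of $A$ and of $L$ are each counted once and placed in the correct block, since these are exactly the directions that distinguish $F_{\kappa}$ from the full-rank picture.
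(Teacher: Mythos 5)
Your proposal is correct and follows essentially the same route as the paper, which states the corollary without a separate proof precisely because it is read off from the diagonal of $\Lambda = \bigl[\begin{smallmatrix}\widetilde{\Sigma}_{\kappa}^{1/2}\\ 0\end{smallmatrix}\bigr]$ in the preceding theorem; your algebraic simplification $\kappa\bigl(1-\kappa\gamma_i^2/(\sigma_i^2+\kappa\gamma_i^2)\bigr)=\sigma_i^2/(\sigma_i^2/\kappa+\gamma_i^2)$ and the classification of entries by the vanishing of $\sigma_i$ and $\gamma_i$ is exactly the intended derivation. The counting of the $\sqrt{\kappa}$, intermediate, and zero blocks against $p-r_L$ and $r_A$ matches the paper's (somewhat loose) indexing conventions, so no further work is needed.
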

The question arises whether there is a direct relation between the singular values of $A$ and the $\sigma_i$. The answer is no, but we do, however, have the following result from \cite{Hansen1989}:
\begin{theorem}[{\cite[Thm. 2.4]{Hansen1989}}]
	Let $\psi_i(A)$ and $\psi_i(L)$ denote the singular values of $A$ and $L$ respectively and let $\sigma_i$ and $\gamma_i$ denote the nonzero entries of the matrices $\Sigma$ and $\Gamma$ respectively. Then for all $\sigma_i,\gamma_i \neq 0$
	\begin{eqnarray*}
		\left\Vert \begin{bmatrix} A\\L\end{bmatrix}^{\dagger}\right\Vert^{-1}_2 \leq & \dfrac{\psi_{r-i+1}(A)}{\sigma_i} & \leq  \left\Vert \begin{bmatrix} A\\L\end{bmatrix}\right\Vert_2, \\
		\left\Vert \begin{bmatrix} A\\L\end{bmatrix}^{\dagger}\right\Vert^{-1}_2 \leq & \dfrac{\psi_{i}(L)}{\gamma_i} & \leq  \left\Vert \begin{bmatrix} A\\L\end{bmatrix}\right\Vert_2.
	\end{eqnarray*}
\end{theorem}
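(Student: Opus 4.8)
The plan is to reduce the whole statement to two elementary facts: that the stacked matrix $\begin{bmatrix} A\\L\end{bmatrix}$ shares its singular values with the invertible GSVD factor $X$, and that right-multiplication by $X$ perturbs the singular values of $A$ (resp. $L$) by a factor controlled entirely by $\|X\|_2$ and $\|X^{-1}\|_2$.

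First I would record the structural identity $\Sigma^T\Sigma + \Gamma^T\Gamma = I_n$. This is immediate from the GSVD definition: in either size regime the block forms of $\Sigma$ and $\Gamma$, together with $\Sigma_r^T\Sigma_r + \Gamma_r^T\Gamma_r = I_r$ and the complementary identity/zero blocks, add up to the full $n\times n$ identity. Writing $M = \begin{bmatrix} A\\L\end{bmatrix} = \begin{bmatrix} U & 0\\0 & V\end{bmatrix}\begin{bmatrix}\Sigma\\\Gamma\end{bmatrix}X$ and noting that $\begin{bmatrix}\Sigma\\\Gamma\end{bmatrix}$ has orthonormal columns (its Gram matrix is exactly $\Sigma^T\Sigma+\Gamma^T\Gamma = I_n$) while $\mathrm{diag}(U,V)$ is orthogonal, I obtain $M^TM = X^TX$. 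Hence $M$ and $X$ have identical singular values, so $\|M\|_2 = \|X\|_2$, and since $X$ is invertible ($M$ therefore has full column rank) also $\big\|M^\dagger\big\|_2 = \|X^{-1}\|_2 = 1/\sigma_{\min}(X)$.

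Next I would invoke the multiplicative singular-value (Weyl) inequality $\sigma_j(BC)\le \sigma_j(B)\,\sigma_1(C)$ in two directions. Applied to $A = (U\Sigma)X$ it gives $\psi_j(A)=\sigma_j(U\Sigma X)\le \sigma_j(\Sigma)\,\|X\|_2$, using that the square orthogonal $U$ leaves singular values unchanged so $\sigma_j(U\Sigma)=\sigma_j(\Sigma)$; applied to $U\Sigma = AX^{-1}$ it gives $\sigma_j(\Sigma)\le \psi_j(A)\,\|X^{-1}\|_2$. Combining these and substituting the norm identities from the first step yields $\big\|M^\dagger\big\|_2^{-1}\le \psi_j(A)/\sigma_j(\Sigma)\le \|M\|_2$ for every index $j$, and the verbatim argument with $V,\Gamma,L$ replacing $U,\Sigma,A$ produces the companion bound for $L$.

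Finally I would translate the generic index $j$ into the indices appearing in the statement, which is pure bookkeeping. The nonzero entries $\sigma_i$ are listed in ascending order whereas the ordinary singular values $\psi_j(A)$ are descending, so matching the two sorted lists pairs $\sigma_i$ with $\psi_{r-i+1}(A)$; for $L$ the $\gamma_i$ are descending and match $\psi_i(L)$ directly, which is precisely why the two lines of the statement carry different index shifts. I expect this last matching step to be the only genuine source of friction: one must set aside the constant entries of $\Sigma$ (the $1$'s) and the zero blocks (e.g.\ the zeros arising when $p>n$) so that only indices with $\sigma_i,\gamma_i\neq 0$ are paired. All of the analytic content, by contrast, is carried by the single identity $M^TM=X^TX$ and the multiplicative singular-value inequality, so once the index conventions are pinned down the bounds follow without further computation.
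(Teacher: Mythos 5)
The paper does not prove this statement: it is imported verbatim as Theorem 2.4 of \cite{Hansen1989}, so there is no in-paper proof to compare against. Your argument is nonetheless a correct, self-contained derivation, and it is essentially the standard proof of Hansen's result: the identity $M^TM = X^TX$ for $M = \begin{bmatrix} A \\ L \end{bmatrix}$ (equivalently, the fact that $\begin{bmatrix}\Sigma \\ \Gamma\end{bmatrix}$ has orthonormal columns) gives $\|M\|_2 = \|X\|_2$ and $\|M^{\dagger}\|_2 = \|X^{-1}\|_2$, after which the two-sided multiplicative bound $\sigma_j(B)/\|X^{-1}\|_2 \leq \sigma_j(BX) \leq \sigma_j(B)\,\|X\|_2$ for invertible $X$, applied to $B = U\Sigma$ and $B = V\Gamma$, yields both lines. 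The one place you are too quick is the final index matching, which you correctly identify as the point of friction but then dispatch with a wave of the hand. The multiplicative inequality pairs the $j$-th \emph{largest} singular value of $A$ with the $j$-th \emph{largest} diagonal entry of $\Sigma$; in the regime $m\geq n$, $p\leq n$ the $n-p$ unit entries of the $I_{n-p}$ block occupy the top positions of that sorted list, so the ascending-ordered $\sigma_i$ ($i\leq p$) sits in position $n-i+1$ and the bound you actually prove concerns $\psi_{n-i+1}(A)/\sigma_i$. ``Setting aside'' the unit entries, if read literally as deleting them before pairing, would instead produce $\psi_{p-i+1}(A)/\sigma_i$, which is false in general. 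Whether $\psi_{n-i+1}$ agrees with the stated $\psi_{r-i+1}$ depends on whether the ``nonzero entries of $\Sigma$'' are understood to include the unit block (so that effectively $r=n$); the transcription in the paper is ambiguous on this point, and the analogous shift by $n-m$ arises in the $L$-line when $p>n$. This is a defect of the statement's bookkeeping rather than of your analytic argument, which is sound.
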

\begin{remark}
This result shows that, if the operator $A$ has quickly decaying singular values, the $\sigma_i$ will have the same behavior, see also \cite[p. 24]{Hansen1998}. This is an important result because it shows how the ill-conditioning of $A$ transfers over to $F_{\kappa}$. Note that if $\sigma_{i} \approx 0$ we have $\gamma_i \approx 1$ and the singular values of $\psi_i(F_{\kappa }) = \sqrt{\dfrac{\sigma_{r-i+1}}{\sigma_{r-i+1}/\kappa + \gamma_{r-i+1}}} \approx \sqrt{\dfrac{\sigma_{r-i+1}}{\sigma_{r-i+1}/\kappa + 1}} \approx 0$. Hence, if the operator $A$ is severely ill-posed, this ill-posedness is inherited by the operator $F_{\kappa}$.
\end{remark}

\subsection{Limiting cases}
\subsubsection{The limit $\kappa\to\infty$ if $p < n$} If $L = I$ the limit $\kappa\to \infty$ yields the original optimization problem. However, if $L\neq I$, it is not immediately clear what happens in the limit $\kappa\to \infty$ due to the presence of the operator $L$. In this section we derive this limit using the GSVD of $(A,L)$. We will show that the in the limit $\kappa\to\infty$ SR3 applies a standard-form transformation. We will proceed as follows. Recall that the variable $x$ in SR3 is given by
\begin{equation}
	\bar{x}_{\kappa} = H_{\kappa}^{-1}\left( \kappa L^T\bar{y}_{\kappa} + A^Tb\right) = \kappa H_{\kappa}^{-1} L^T\bar{y}_{\kappa} + H_{\kappa}^{-1} A^Tb := x_1 + x_2,
\label{eq:SR3_x1x2}
\end{equation}
consisting of the two parts $x_1$ and $x_2$.
We will now show that, in the limit $\kappa\to\infty$, SR3 applies a standard-form transformation, by showing that $x_1$ and $x_2$ defined in \cref{eq:SR3_x1x2} satisfy
\begin{equation}
	x_1  = \bar{x}_{\mathcal{M}}, \quad x_2 = \bar{x}_{\mathcal{N}} ,
\end{equation}
where $x_{\mathcal{M}}$ and $x_{\mathcal{N}}$ are determined by the standard-form transformation, given by \cref{eq:xR} and \cref{eq:xN} respectively.
\\
Given the GSVD of $(A,L)$, the matrix $F_{\kappa}$ and the vector $g_{\kappa}$ are given by
\begin{equation}
	F_{\kappa} = \begin{bmatrix} \sqrt{\kappa} V\left( I_p - \kappa\Gamma\left( \Sigma^T\Sigma + \kappa\Gamma^T\Gamma\right)^{-1}\Gamma^T\right)V^T \\ \kappa U\Sigma\left(\Sigma^T\Sigma + \kappa\Gamma^T\Gamma\right)^{-1}\Gamma^TV^T \end{bmatrix},
\label{eq:Fkgsvd}
\end{equation}
and
\begin{equation}
	g_{\kappa} = \begin{bmatrix} \sqrt{\kappa}V\Gamma(\Sigma^T\Sigma + \kappa\Gamma^T\Gamma)^{-1}\Sigma^TU^Tb \\ U\left(I_m - \Sigma\left(\Sigma^T\Sigma + \kappa\Gamma^T\Gamma\right)^{-1}\Sigma^T\right)U^Tb\end{bmatrix}.
\label{eq:gkgsvd}
\end{equation}
As $\kappa\to \infty$ we have
\[
	F_{\kappa} \to \begin{bmatrix} 0 \\ U\Sigma\Gamma^{\dagger}V^T \end{bmatrix} \text{ and } g_{\kappa} \to \begin{bmatrix} 0 \\ b \end{bmatrix}.
\]
Hence, as $\kappa\to\infty$, we obtain
\begin{equation}
	\bar{y}_{\kappa} = \argmin_{y} \dfrac{1}{2}\Vert U\Sigma\Gamma^{\dagger}V^Ty - b\Vert_2^2 + \mathcal{R}(y).
\label{eq:SR3_std1}
\end{equation}
Using the GSVD, we have
\[
	H_{\kappa}^{-1} = X^{-1}\left( \Sigma^T\Sigma + \kappa\Gamma^T\Gamma \right)^{-1}X^{-T},
\]
 and hence as $\kappa\to\infty$ we have
\[
	\left( \Sigma^T\Sigma + \kappa\Gamma^T\Gamma \right)^{-1} \to
	\begin{bmatrix}
	0 & 0 \\
	0 & I_{p-r_L}
	\end{bmatrix}.
\]
Hence,
\begin{equation}
	H_{\kappa}^{-1}\to X^{-1}	\begin{bmatrix}
	0 & 0 \\
	0 & I_{p-r_{L}}
	\end{bmatrix}X^{-T}.
\end{equation}
Recall that the last columns of $X$ are a basis for the nullspace of $L$ and hence $H_{\kappa}$ projects onto the nullspace of $L$. Using the GSVD of $(A,L)$ we see that
\[
	\lim_{\kappa\to\infty} x_1 := \lim_{\kappa\to\infty} H_{\kappa}^{-1}A^Tb = X^{-1}	\begin{bmatrix}
	0 & 0 \\
	0 & I_{p-r_{L}}
	\end{bmatrix}U^Tb,
\]
which is equivalent to the nullspace component from \eqref{eq:xN}. \\\\
We now show that $x_1$ corresponds to the part in the range of $L$. We have
\[
	x_1 := \kappa H_{\kappa}^{-1}L^T\bar{y} = \kappa X^{-1}\left(\Sigma^T\Sigma + \kappa\Gamma^T\Gamma \right)^{-1}\Gamma^TV^T\bar{y}.
\]
The elements of the diagonal matrix $\kappa\left(\Sigma^T\Sigma + \kappa\Gamma^T\Gamma \right)^{-1}\Gamma^T$ are
\[
	\begin{array}{cc} \dfrac{\gamma_i}{\sigma_i^2/\kappa + \gamma_i^2} & \text{ if } i \leq r_{L} \\ 0 & \text{ if } i > r_{L} \end{array},
\]
and as $\kappa\to\infty$
\[
	\begin{array}{cc} \dfrac{1}{\gamma_i} & \text{ if } i \leq r_{L} \\ 0 & \text{ if } i > r_{L} \end{array}.
\]
Hence, as $\kappa\to\infty$
\[
	\kappa \left(\Sigma^T\Sigma + \kappa\Gamma^T\Gamma \right)^{-1}\Gamma^T \to \Gamma^{\dagger},
\]
and thus
\[
	\kappa H_{\kappa}^{-1}L^T \to X^{-1}\Gamma^{\dagger}V^T =  L_{A}^{\dagger}.
\]
The limit for the component $x_1$ is now given by
\[
	\lim_{\kappa\to \infty} x_1 = X^{-1}\Gamma^{\dagger}V^T\bar{y}_{\kappa} = L_{A}^{\dagger}\bar{y}_{\kappa},
\]
where $\bar{y}_{\kappa}$ solves
\[
	\bar{y}_{\kappa} = \argmin_{y} \dfrac{1}{2} \Vert U\Sigma\Gamma^{\dagger}V^Ty - b\Vert_2^2 + \mathcal{R}(y),
\]
which is equivalent to \eqref{eq:xR}. 

\subsubsection{The limit $\kappa\to\infty$ if $p > n$}
If $p > n$, the limit $\kappa\to\infty$ is a bit more subtle. For large $\kappa$, we have
\begin{equation}
	F_{\kappa} \sim \begin{bmatrix} 
	\multicolumn{2}{c}{V\begin{bmatrix} 0_{r_L\times r_L} & 0 \\ 0 & \sqrt{\kappa}I_{p-r_L} \\ \end{bmatrix} V^T} \\ 
	\multicolumn{2}{c}{U\Sigma\Gamma^{\dagger}V^T} 
	\end{bmatrix}, \quad \text{ and } g_{\kappa} \sim \begin{bmatrix} 0_{p\times 1} \\ b \end{bmatrix}
\label{eq:Fp>n}
\end{equation}
Hence, for large $\kappa$, SR3 solves a system of the form 
\[
	\begin{bmatrix} 
	\multicolumn{2}{c}{\sqrt{\kappa}V_{p-r_L}V_{p-r_L}^T} \\ 
	\multicolumn{2}{c}{U\Sigma\Gamma^{\dagger}V^T} 
	\end{bmatrix}y = \begin{bmatrix} 0_{p\times 1} \\ b \end{bmatrix},
\]
where $V_{p-r_L}$ are the \textit{last} $p-r_L$ columns of $V$, which means that $V_{p-r_L}V_{p-r_L}^T = \mathcal{P}_{\mathcal{N}(L^T)}$. Because  $V_{p-r_L}V_{p-r_L}^Ty = 0$, the solution has no parts in $\mathcal{N}(L^T)$, and is restricted to the subspace $\mathcal{R}(L)$. The bottom part of $F_{\kappa}$ is equal to the case $p < n$, and hence corresponds to matrix $AL_{A}^{\dagger}$. Let $\bar{y}_{\text{std}}$ be the solution to the standard-form transformed system. Then, as $\kappa\to\infty$, the minimizer $\bar{y}_{\kappa}$ of SR3 satisfies
\begin{equation}
	\bar{y}_{\text{std}}= \mathcal{P}_{\mathcal{R}(L)}\bar{y}_{\kappa}.
\label{eq:p>n_condition}
\end{equation}
However, looking at the original formulation in \cref{eq:relaxed}, we see that as $\kappa\to\infty$ we have
\[
	y = Lx,
\]
which means that $y\in\mathcal{R}(L)$. Hence, condition \cref{eq:p>n_condition} is immediately satisfied and the solutions are the same.
 
\subsection{The limit $\kappa\to 0$} 
The limit $\kappa\to 0$ is much easier to derive. Recall that
\[
	\bar{x}_{\kappa} = H_{\kappa}^{-1}\left(A^Tb + \kappa L^T\bar{y}_{\kappa}\right).
\]
As $\kappa\to 0$ we have $\kappa H_{\kappa}^{-1}L^T\bar{y}_{\kappa}\to 0$ and $H_{\kappa}\to (A^TA)^{-1}$. Hence $\lim_{\kappa\to 0}x_{\kappa} = (A^TA)^{-1}A^Tb$ which is the unregularized minimum norm solution. 

\subsection{Relation to the standard-form transformation}
\subsubsection{The case $p \leq n$}
We have shown that as $\kappa\to \infty$ SR3 implicitly applies a standard-form transformation and that as $\kappa\to 0$ the system is unregularized. The question arises what happens for finite $\kappa > 0$. To show what happens, we rewrite the singular values of $F_{\kappa}$ as
\[
	\psi_i(F_{\kappa}) = \sqrt{\dfrac{\sigma_{r-i+1}^2}{\sigma_{r-i+1}^2/\kappa + \gamma_{r-i+1}^2}} = \sqrt{\dfrac{\sigma_{r-i+1}^2/\gamma_{r-i+1}^2}{\frac{\sigma_{r-i+1}^2/\gamma_{r-i+1}^2}{\kappa} + 1}} = \sqrt{\dfrac{\psi_i^2\left(AL_{A}^{\dagger}\right)}{\psi_i^2\left(AL_{A}^{\dagger}\right)/\kappa + 1}}.
\]
This is equivalent to equation 9 in \cite{Zheng2019}, where it was shown that if $L^TL = I$,
\[
	\psi_i(F_{\kappa}) = \dfrac{\psi_i^2(A)}{\psi_i^2(A)/\kappa + 1}.
\]
This shows that SR3 is applied to the matrix $AL_{A}^{\dagger}$. This leads to the following theorem.
\begin{theorem}
Let $p \leq n$. The following diagram commutes.

\begin{equation*}
\begin{tikzcd}[ampersand replacement=\&]
\min_{x}\frac{1}{2}\Vert Ax - b\Vert_2^2 + \lambda\mathcal{R}(Lx) \arrow[r, "\text{SR3}"] \arrow["",d]
\& 
\begin{array}{@{}r@{\,}l}  
& \bar{y}_{\kappa} = \argmin_{y}\textstyle{\frac{1}{2}}\Vert F_{\kappa}y - g_{\kappa}\Vert_2^2 + \lambda\mathcal{R}(y) \\
& \bar{x}_{\kappa} = H_{\kappa}^{-1}(\kappa L^T\bar{y}_{\kappa} + A^Tb) \arrow[shift right, swap]{d}{} 
\end{array}
\& 
\\
\begin{array}{@{}r@{\,}l}  
& \bar{z}_{\kappa} = \argmin_{z} \frac{1}{2}\Vert AL_{A}^{\dagger}z - b\Vert_2^2 + \lambda\mathcal{R}(z) \\
& \bar{x}_{\kappa} = L_{A}^{\dagger}\bar{z}_{\kappa} + x_{\mathcal{N}} 
\end{array}
\arrow[r,"\text{SR3}"] 
\& 
\begin{array}{@{}r@{\,}l}  
& \bar{y}_{\kappa} = \argmin_{y}\frac{1}{2}\Vert F_{\kappa}y - g_{\kappa}\Vert_2^2 + \lambda\mathcal{R}(y) \\
& \bar{z}_{\kappa} = H_{\kappa}^{-1}(\kappa \bar{y}_{\kappa} + (AL_A^{\dagger})^Tb) \\
& \bar{x}_{\kappa} = L_{A}^{\dagger}\bar{z}_{\kappa} + x_{\mathcal{N}} \arrow[shift right, swap]{u}{}
\end{array} 
\end{tikzcd}
\end{equation*}
\end{theorem}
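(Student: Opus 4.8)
The plan is to read the diagram as two claims: that SR3 applied directly to \cref{eq:rlsqrs} (the top edge) and SR3 applied to the standard-form problem in $z$ after the transformation \cref{eq:xR}--\cref{eq:xN} (the bottom edge) yield, first, the same inner minimizer $\bar y_\kappa$, and second, the same reconstructed $\bar x_\kappa$. I would carry out everything in the GSVD coordinates of $(A,L)$, writing $A=U\Sigma X$, $L=V\Gamma X$ and exploiting the identity $AL_A^\dagger = U\Sigma\Gamma^\dagger V^T$ derived above. The organizing remark is that $\tilde A := AL_A^\dagger$ has orthonormal factors $U,V$ and diagonal part $\Sigma\Gamma^\dagger$, so that the SR3 data $(\tilde F_\kappa,\tilde g_\kappa)$ built from the pair $(\tilde A, I)$ can be computed by the same substitution that produced \cref{eq:Fkgsvd}--\cref{eq:gkgsvd}.

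First I would compare the two inner problems. Substituting $\tilde A = U\Sigma\Gamma^\dagger V^T$ into $\tilde H_\kappa := \tilde A^T\tilde A + \kappa I$ and into the definition of $\tilde F_\kappa$, and using $\sigma_i^2/\gamma_i^2 = \psi_i^2(AL_A^\dagger)$ together with the singular-value identity displayed before the theorem, the blocks of $\tilde F_\kappa$ collapse onto those of $F_\kappa$ on every range-of-$L$ direction ($\gamma_i\neq 0$), so the Hessians $F_\kappa^TF_\kappa$ and $\tilde F_\kappa^T\tilde F_\kappa$ agree there. The vectors $g_\kappa$ and $\tilde g_\kappa$ are, by contrast, genuinely different: their top blocks agree but their bottom blocks differ by $(\tilde A\tilde H_\kappa^{-1}\tilde A^T - AH_\kappa^{-1}A^T)b$, which in GSVD coordinates is supported exactly on the left directions of $U$ belonging to the nullspace of $L$ (the $I_{n-p}$ block of $\Sigma$, where $\gamma_i = 0$ and $\sigma_i = 1$). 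The key computation is that $F_\kappa^{\mathrm{bot}}$ vanishes on precisely those directions, whence $F_\kappa^T(g_\kappa-\tilde g_\kappa)=0$; consequently $\tfrac12\|F_\kappa y - g_\kappa\|_2^2$ and $\tfrac12\|\tilde F_\kappa y - \tilde g_\kappa\|_2^2$ differ only by a constant independent of $y$, and the two regularized inner problems share the minimizer $\bar y_\kappa$.

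Next I would match the reconstructions. With $\eta := V^T\bar y_\kappa$, expanding the top-edge formula \cref{eq:SR3x} gives $\bar x_\kappa = X^{-1}(\Sigma^T\Sigma+\kappa\Gamma^T\Gamma)^{-1}\bigl(\kappa\Gamma^T\eta + \Sigma^TU^Tb\bigr)$, while the bottom edge gives $\bar x_\kappa = L_A^\dagger\tilde H_\kappa^{-1}\bigl(\kappa\bar y_\kappa + \tilde A^Tb\bigr) + x_\mathcal N = X^{-1}\Gamma^\dagger(D^TD+\kappa I)^{-1}\bigl(\kappa\eta + D^TU^Tb\bigr) + x_\mathcal N$ with $D := \Sigma\Gamma^\dagger$. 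Comparing entrywise, on the directions with $\gamma_i\neq 0$ the $\gamma_i^{-1}$ factors in $\Gamma^\dagger$ combine with $(D^TD+\kappa I)^{-1}$ to reproduce the common scaling $(\sigma_i^2+\kappa\gamma_i^2)^{-1}$, so the range-of-$L$ parts coincide; on the directions with $\gamma_i = 0$ (where $\sigma_i = 1$) the first term contributes nothing and the required value $(U^Tb)_i$ is supplied exactly by $x_\mathcal N = X^{-1}\mathrm{diag}(0,I)U^Tb$ from \cref{eq:xN}. This gives $\bar x_\kappa^{\mathrm{top}} = \bar x_\kappa^{\mathrm{bot}}$ and the diagram commutes.

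The hard part is the nullspace bookkeeping. Because $\tilde A = AL_A^\dagger$ deliberately annihilates the action of $A$ on $\mathcal N(L)$, the operators $(F_\kappa,g_\kappa)$ and $(\tilde F_\kappa,\tilde g_\kappa)$ do not agree as matrices, and the whole argument rests on showing that each discrepancy either lies in $\ker F_\kappa^T$---so that it cannot perturb $\bar y_\kappa$---or is exactly restored by the separate nullspace term $x_\mathcal N$ in \cref{eq:xN}. Tracking the GSVD index ranges (the structural $I_{n-p}$ block, and $\gamma_i = 0$ versus $\gamma_i\neq 0$) correctly is where the proof is most delicate; once the range/nullspace splitting is set up, the remaining steps are routine diagonal arithmetic and the pseudoinverse identities for $\Gamma^\dagger$.
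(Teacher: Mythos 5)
Your argument is correct, and it is substantially more complete than what the paper actually supplies: the paper states this theorem without a formal proof, supporting it only by the preceding identity $\psi_i(F_{\kappa}) = \bigl(\psi_i^2(AL_A^{\dagger})/(\psi_i^2(AL_A^{\dagger})/\kappa+1)\bigr)^{1/2}$, i.e.\ by observing that $F_{\kappa}$ has the same singular values as the operator SR3 would produce from the pair $(AL_A^{\dagger}, I)$, and concluding that ``SR3 is applied to the matrix $AL_A^{\dagger}$.'' That argument only matches spectra; it says nothing about the right singular vectors, nothing about the data vectors $g_{\kappa}$ versus $\tilde g_{\kappa}$, and nothing about the $x$-reconstruction step. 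Your route fills exactly these gaps: the block-by-block GSVD computation showing that the top and bottom blocks of $F_{\kappa}$ and $\tilde F_{\kappa}$ coincide on the directions with $\gamma_i\neq 0$, the observation that $g_{\kappa}-\tilde g_{\kappa}$ is supported on the $U$-directions of the $I_{n-p}$ block of $\Sigma$ and hence lies in $\ker F_{\kappa}^T$ (so the two inner objectives differ by a constant and share the minimizer $\bar y_{\kappa}$), and the entrywise verification that $H_{\kappa}^{-1}(\kappa L^T\bar y_{\kappa}+A^Tb)$ equals $L_A^{\dagger}\tilde H_{\kappa}^{-1}(\kappa\bar y_{\kappa}+(AL_A^{\dagger})^Tb)+x_{\mathcal N}$, with the $\gamma_i=0$ directions supplied exactly by $x_{\mathcal N}$, are all checks the paper omits and all come out correct. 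The one caveat worth making explicit: your claim that the Hessians agree holds only on directions with $\gamma_i\neq 0$; if $r_L<p$ the top block of $F_{\kappa}$ carries an extra $\sqrt{\kappa}$ on the remaining directions while $\tilde F_{\kappa}$ carries a zero, so the two inner problems are genuinely different there (this is precisely the discrepancy the paper's $p>n$ theorem repairs with the maps $\varphi$ and $\varrho$). For $p\leq n$ you are therefore implicitly assuming $L$ has full row rank, which matches the paper's standing assumption but should be stated.
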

%

\subsubsection{The case $p > n$}
If $p > n$ the situation is different. Recall that the singular values of $F_{\kappa}$ are given by
\[
	\psi_i(F_{\kappa}) = \begin{cases} \sqrt{\kappa} & \text{ if } i \leq p - r_{L} \\ 
	\sqrt{\dfrac{\psi_i^2\left(AL_{A}^{\dagger}\right)}{\psi_i^2\left(AL_{A}^{\dagger}\right)/\kappa + 1}}. & \text{ if } p - r_{L} < i \leq p - r_{L} + r_A \\  
	0 & \text{ if } i > p - r_{L} + r_A	
	\end{cases}
\]
The singular values for $F_{\kappa}$ when SR3 is applied to $AL_{A}^{\dagger}$ are given by
\[
	\psi_i(F_{\kappa}) = 
	\begin{cases} \sqrt{\dfrac{\psi_i^2\left(AL_{A}^{\dagger}\right)}{\psi_i^2\left(AL_{A}^{\dagger}\right)/\kappa + 1}}. & \text{ if } i \leq r_A \\  
	0 & \text{ if } i > r_A	
	\end{cases}
\]
Hence, there are extra singular values $\sqrt{\kappa}$ when SR3 is applied to the general-form system as opposed to the standard-form system. The difference may be seen from the expression \cref{eq:Fkgsvd}. We have
\[
	\kappa\Gamma\left(\Sigma^T\Sigma + \kappa\Gamma^T\Gamma\right)\Gamma^T = 
	\begin{cases}
		\begin{bmatrix} I_{n-r_A} & 0 & 0 \\ 0 & \kappa\Gamma_m(\Sigma^T\Sigma + \kappa\Gamma_m^T\Gamma_m)\Gamma_m^T & 0 \\ 0 & 0 & 0\end{bmatrix} & \text{ if }  p  > n \\
		\begin{bmatrix} I_{p-r_A} & 0 \\ 0 & \kappa\Gamma_m(\Sigma^T\Sigma + \kappa\Gamma_m^T\Gamma_m)\Gamma_m^T \end{bmatrix} & \text{ if }  p  \leq n 
	\end{cases}
\]
Hence, the top part of $F_{\kappa}$ is different. Before we state our theorem let us introduce some notation. For the general-form problem, let the function $\varphi$ be defined as the spectral cut-off function that makes the first $p - r_L$ singular values of $F_{\kappa}$ zero. Similarly, for the standard-form transformed problem, let $\varrho$ be defined as the function that makes $p - r_L$ singular values that are 0 equal to $\sqrt{\kappa}$ and accordingly permutes the SVD. We then have $\varphi\circ\varrho = \text{Id}$. We have the following theorem.
\begin{theorem}
Let $p > n$. The following diagram commutes.

\begin{equation*}
\begin{tikzcd}[ampersand replacement=\&]
\min_{x}\frac{1}{2}\Vert Ax - b\Vert_2^2 + \lambda\mathcal{R}(Lx) \arrow[r, "\text{SR3}"] \arrow["",d]
\& 
\begin{array}{@{}r@{\,}l}  
& \bar{y}_{\kappa} = \argmin_{y}\textstyle{\frac{1}{2}}\Vert F_{\kappa}y - g_{\kappa}\Vert_2^2 + \lambda\mathcal{R}(y) \\
& \bar{x}_{\kappa} = H_{\kappa}^{-1}(\kappa L^T\bar{y}_{\kappa} + A^Tb) \arrow[shift right, swap]{d}{\varphi} 
\end{array}
\\
\begin{array}{@{}r@{\,}l}  
& \bar{z}_{\kappa} = \argmin_{z} \frac{1}{2}\Vert AL_{A}^{\dagger}z - b\Vert_2^2 + \lambda\mathcal{R}(y) \\
& \bar{x}_{\kappa} = L_{A}^{\dagger}\bar{z}_{\kappa}  + x_{\mathcal{N}} 
\end{array}
\arrow[r,"\text{SR3}"] 
\& 
\begin{array}{@{}r@{\,}l}  
& \tilde{\bar{y}}_{\kappa} = \argmin_{y}\frac{1}{2}\Vert \tilde{F}_{\kappa}y - g_{\kappa}\Vert_2^2 + \lambda\mathcal{R}(y) \\
& \tilde{\bar{z}}_{\kappa}  = H_{\kappa}^{-1}(\kappa \tilde{\bar{y}}_{\kappa}  + (AL_A^{\dagger})^Tb) \\
& \tilde{\bar{x}}_{\kappa}  = L_{A}^{\dagger}\bar{z}_{\kappa}  + x_{\mathcal{N}} \arrow[shift right, swap]{u}{\varrho}
\end{array}
\end{tikzcd}
\end{equation*}
\end{theorem}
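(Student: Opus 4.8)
The plan is to prove commutativity by expressing both SR3 operators in the GSVD basis of $(A,L)$ and then checking that the two reconstruction maps return the same $\bar{x}_{\kappa}$. Along the top path SR3 is applied directly to the general-form problem, giving $F_{\kappa}$ and $g_{\kappa}$ as in \cref{eq:Fkgsvd,eq:gkgsvd}. Along the bottom path the standard-form transformation first replaces the problem by $\min_{z}\tfrac12\|\widetilde{A}z-b\|_2^2+\lambda\mathcal{R}(z)$ with $\widetilde{A}=AL_{A}^{\dagger}=U\Sigma\Gamma^{\dagger}V^{T}$, after which SR3 is applied with the trivial regularization operator $I_p$, producing $\widetilde{F}_{\kappa}$ and a data vector $\widetilde{g}_{\kappa}$. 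First I would write both operators in the $V$-basis. Both share the right singular vectors $V$; both bottom blocks annihilate the subspace $\mathcal{N}(L^{T})$ spanned by the last $p-r_L$ columns of $V$ (because $\Gamma^{T}$ and $\Gamma^{\dagger}$ have zero columns there); and a direct comparison, using the GSVD normalization $\Sigma_r^{T}\Sigma_r+\Gamma_r^{T}\Gamma_r=I_r$, shows that on the complementary range $\mathcal{R}(L)$ the two diagonal blocks coincide entry by entry. The sole discrepancy is the top block restricted to $\mathcal{N}(L^{T})$, where $F_{\kappa}$ acts as $\sqrt{\kappa}\,\mathcal{P}_{\mathcal{N}(L^{T})}$ and $\widetilde{F}_{\kappa}$ as $0$. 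These are exactly the $p-r_L$ singular values equal to $\sqrt{\kappa}$ versus $0$ tabulated above, so that $\varphi(F_{\kappa})=\widetilde{F}_{\kappa}$, $\varrho(\widetilde{F}_{\kappa})=F_{\kappa}$ and $\varphi\circ\varrho=\mathrm{Id}$, which establishes the vertical arrows.

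The second step is to show that the data agrees and carries nothing in the discrepant directions, so that the two misfits differ only by a null-space penalty. From \cref{eq:gkgsvd} the top block of $g_{\kappa}$ equals $\sqrt{\kappa}\,V\Gamma(\Sigma^{T}\Sigma+\kappa\Gamma^{T}\Gamma)^{-1}\Sigma^{T}U^{T}b$, and since $\Gamma$ has zero rows on $\mathcal{N}(L^{T})$ this vector has no component along the last $p-r_L$ columns of $V$; the same entrywise comparison shows $\widetilde{g}_{\kappa}=g_{\kappa}$. Consequently $\|F_{\kappa}y-g_{\kappa}\|_2^2=\|\widetilde{F}_{\kappa}y-g_{\kappa}\|_2^2+\kappa\|\mathcal{P}_{\mathcal{N}(L^{T})}y\|_2^2$, so the two SR3 objectives satisfy $J_{\mathrm{gen}}(y)=J_{\mathrm{std}}(y)+\tfrac{\kappa}{2}\|\mathcal{P}_{\mathcal{N}(L^{T})}y\|_2^2$ with the same regularizer. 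I would then observe that both reconstruction maps ignore the null-space part of $y$: the general-form map \cref{eq:SR3x} returns $\bar{x}_{\kappa}=H_{\kappa}^{-1}(\kappa L^{T}\bar{y}_{\kappa}+A^{T}b)$ and depends on $\bar{y}_{\kappa}$ only through $\mathcal{P}_{\mathcal{R}(L)}\bar{y}_{\kappa}$ since $L^{T}$ kills $\mathcal{N}(L^{T})$, while the standard-form map $\widetilde{x}_{\kappa}=L_{A}^{\dagger}\widetilde{z}_{\kappa}+x_{\mathcal{N}}$ with $L_{A}^{\dagger}=X^{-1}\Gamma^{\dagger}V^{T}$ depends on $\widetilde{y}_{\kappa}$ only through $\mathcal{P}_{\mathcal{R}(L)}\widetilde{y}_{\kappa}$ because $\Gamma^{\dagger}$ has zero columns on $\mathcal{N}(L^{T})$, with $x_{\mathcal{N}}$ supplying the nullspace component from \cref{eq:xN}.

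It therefore remains only to match the two minimizers on $\mathcal{R}(L)$, and this is the step I expect to be the main obstacle. For $p=2$ it is immediate: the extra penalty $\tfrac{\kappa}{2}\|\mathcal{P}_{\mathcal{N}(L^{T})}y\|_2^2$ forces $\mathcal{P}_{\mathcal{N}(L^{T})}\bar{y}_{\kappa}=0$, the two objectives coincide on $\mathcal{R}(L)$, and hence $\mathcal{P}_{\mathcal{R}(L)}\bar{y}_{\kappa}=\mathcal{P}_{\mathcal{R}(L)}\widetilde{y}_{\kappa}$. For general $p$ the regularizer $\|\cdot\|_p^p$ couples $\mathcal{P}_{\mathcal{R}(L)}y$ and $\mathcal{P}_{\mathcal{N}(L^{T})}y$, so examining the subgradient optimality condition on $\mathcal{N}(L^{T})$ only yields $\|\mathcal{P}_{\mathcal{N}(L^{T})}\bar{y}_{\kappa}\|=\mathcal{O}(\kappa^{-1})$ rather than exact vanishing; here the argument must instead invoke the structural fact used for \cref{eq:p>n_condition}, namely that the relaxed variable represents $Lx$ and hence lies in $\mathcal{R}(L)$, so that $\mathcal{P}_{\mathcal{N}(L^{T})}\bar{y}_{\kappa}=0$ and $\varrho$ sends $\widetilde{y}_{\kappa}$ back to $\bar{y}_{\kappa}$. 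Combining the operator identity $\varphi\circ\varrho=\mathrm{Id}$, the shared data, and the fact that both reconstruction maps factor through $\mathcal{P}_{\mathcal{R}(L)}$ then closes the square in both directions and proves commutativity.
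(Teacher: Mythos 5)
Your operator-level argument is essentially the paper's own: the paper likewise writes both $F_{\kappa}$ and the standard-form $\tilde{F}_{\kappa}$ in the GSVD basis, observes that they agree except for the $p-r_L$ extra singular values $\sqrt{\kappa}$ supported on $\mathcal{N}(L^T)$, and lets $\varphi$ and $\varrho$ reconcile exactly that discrepancy. The paper offers no formal proof beyond this singular-value computation, so your exact decomposition $\Vert F_{\kappa}y - g_{\kappa}\Vert_2^2 = \Vert \tilde{F}_{\kappa}y - g_{\kappa}\Vert_2^2 + \kappa\Vert \mathcal{P}_{\mathcal{N}(L^T)}y\Vert_2^2$, together with the check that $g_{\kappa}$ has no component in $\mathcal{N}(L^T)$, is a welcome sharpening of what the paper actually argues.

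The gap is in your final step. To match the two minimizers you assert $\mathcal{P}_{\mathcal{N}(L^T)}\bar{y}_{\kappa}=0$ at finite $\kappa$ by invoking the ``structural fact'' that the relaxed variable represents $Lx$ and hence lies in $\mathcal{R}(L)$. But in the relaxed problem \cref{eq:relaxed} $y$ is a free variable; the identity $y=Lx$ holds only in the limit $\kappa\to\infty$, which is precisely how the paper justifies \cref{eq:p>n_condition} there, and it cannot be imported to finite $\kappa$. Your own subgradient estimate $\Vert\mathcal{P}_{\mathcal{N}(L^T)}\bar{y}_{\kappa}\Vert=\mathcal{O}(\kappa^{-1})$ is the honest statement: for $p\neq 2$ the regularizer $\Vert\cdot\Vert_p^p$ is separable in the coordinate basis rather than in the $V$-basis, an orthogonal projection onto $\mathcal{R}(L)$ can increase $\Vert y\Vert_p$, and so the minimizer of the general-form objective can carry a genuinely nonzero $\mathcal{N}(L^T)$ component that the standard-form minimizer never sees. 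The commutativity therefore holds exactly at the level of the operators (which is all that the paper's $\varphi$, $\varrho$ formalism really claims) and at the level of solutions for $p=2$ or in the limit $\kappa\to\infty$; your solution-level identification of $\bar{y}_{\kappa}$ with $\varrho(\tilde{\bar{y}}_{\kappa})$ for general $p$ and finite $\kappa$ needs either that restriction or an additional assumption aligning $\mathcal{N}(L^T)$ with the coordinate axes.
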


\section{Approximating the value function}
\label{sec:value_function}
In this section we quantify the distance between the Pareto curve of the original problem and the Pareto curve of the relaxed problem in terms of $\kappa$. We first describe the value function of the problem and then present our theorem.

The value function of an optimization problem expresses the value of the objective at the solution as a function of the other parameters. Using the standard-form transformation, we can, without loss of generality, consider the standard-form value function:
\[
	\phi_\kappa(\tau) = \min_{y} \|F_{\kappa}y - g_{\kappa}\|_2 \quad \text{s.t.} \quad \Vert y\Vert_p \leq \tau.
\]
\subsection{Value function for $\kappa \rightarrow \infty$}
We have seen that for $\kappa\rightarrow\infty$, we retrieve the unrelaxed problem with value function
\[
	\phi_\infty(\tau) = \min_{y} \|Ay - b\|_2 \quad \text{s.t.} \quad \Vert y\Vert_p \leq \tau.
\]
Following \cite{VanDenBerg2008} we obtain the following (computable) upper and lower bounds for the value function
\[
b^T\widetilde{r} - \tau \|A^T\widetilde{r}\|_q \leq \phi_\infty(\tau) \leq \|\widetilde{r}\|_2,
\]
where $\widetilde{y}$ is any feasible point (i.e., $\|\widetilde{y}\|_p \leq \tau$), and $\widetilde{r} = b - A\widetilde{y}$ is the corresponding residual and $p^{-1} + q^{-1} = 1$.
Moreover, by {\cite[Col. 2.2]{VanDenBerg2008}} the derivative of the value function is given by
\[
\phi_\infty'(\tau) = - \|A^T\overline{r}\|_q/\|\overline{r}\|_2,
\]
with $\overline{r} = b - A\overline{y}$ and $\overline{y}=\argmin_{\|y\|_p\leq\tau} \|Ay - b\|_2$.

To gain some insight in the behaviour of the value function, we consider $\phi_\infty$ and $\phi_\infty'$ at $\tau = 0$ and $\tau = \tau_{*} = \|A^{\dagger}b\|_p$:
\[
\phi_\infty(0) = \|b\|_2 , \quad \phi_\infty'(0) = -\|A^Tb\|_q/\|b\|_2,
\]
\[
\phi_\infty(\tau_*) = \|(I - AA^{\dagger})b\|_2, \quad \phi_\infty'(\tau_*) = 0.
\]
This immediately suggests that $\phi_\infty$ decreases linearly near $\tau = 0$ (the zero solution) and flattens of near $\tau = \tau_*$ (the unconstrained minimizer). Since $\phi_\infty$ is known to be convex, its second derivative is always positive and will gradually bend the curve from decreasing to flat. How fast this happens and whether one can expect the typical L-shape, depends on how fast the curve decreases initially. We can bound $\phi_\infty'(0)$ as follows. We let $b = Ay$ and find
\[
\|A^Tb\|_q = \|A^T\!Ay\|_q \geq C_q \|A^T\! Ay\|_2 \geq C_q\|A^\dagger\|^2 \|y\|_2,
\]
where $C_q$ is a constant that exists due to the equivalence of norms. Furthermore,
\[
\|b\|_2 = \|Ay\|_2 \leq \|A\|_2 \|y\|_2.
\]
From this we get
\[
\phi'_\infty(0) \leq - C_q \kappa_2(A) \|A^\dagger\|_2,
\]
with $\kappa_2(A) = \|A\|_2\|A^\dagger\|_2$ the condition number of $A$. We thus expect a steep slope for ill-conditioned problems, giving rise for the characteristic $L$-shape of the curve. While this behavior is well-established for $p=2$ where it can be analysed using the SVD of $A$ \cite{Hansen1992}, this analysis gives us new insight in the behavior of the Pareto curve for ill-posed problems for general $p$. An example for $p = 1$, $L = I$ is shown in figure \ref{fig:pareto_l1}.
\begin{figure}
\centering
\includegraphics[scale=.5]{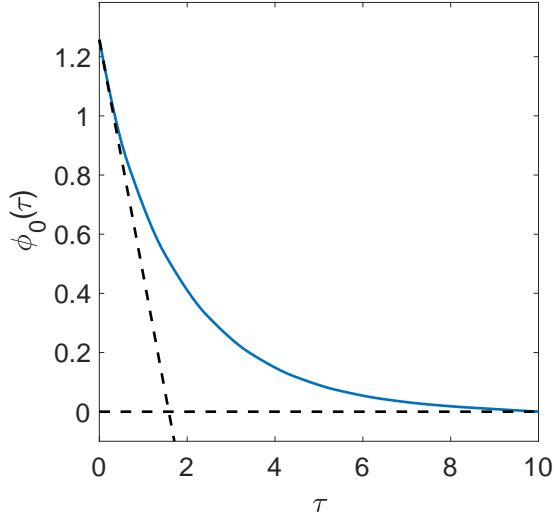}
\caption{Pareto curve for an ill-posed problem; the matrix $A$ is diagonal with elements $e^{-(i - 1)/2}$ for $i = 1,2, \ldots 10$; $b = Ax$ with $x = (1, 1, \ldots, 1)$. The tangent lines at $\tau = 0$ and $\tau = \tau_*$ are shown in black.}
\label{fig:pareto_l1}
\end{figure}

\subsection{Relaxed value function}
We now present our theorem on the distance between the Pareto curve of the original problem and the Pareto curve of the relaxed problem.
\begin{theorem}
\label{thm:valuefunction}
	The distance between the Pareto curve of the original problem and the Pareto curve of the relaxed problem is given by
	\[
		\left(\phi_\kappa(\tau)\right)^2 - \left(\phi_\infty(\tau)\right)^2 = -\kappa^{-1} \Vert A^T(b - A\overline{y}_{\kappa})\Vert_2^2 + \mathcal{O}\left(\kappa^{-2}\right),
	\]
	where $\overline{y}_{\kappa}$ is the solution of the relaxed problem. In particular, we have
	\[
		\phi_{\kappa}(\tau) \leq \phi_{\infty}(\tau).
	\]
\end{theorem}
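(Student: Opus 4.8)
The plan is to exploit the fact that $F_\kappa$ and $g_\kappa$ arise from \emph{minimizing out} $x$ in \cref{eq:relaxed}, which gives the exact partial-minimization identity
\begin{equation*}
\|F_\kappa y - g_\kappa\|_2^2 = \min_x\left[\|Ax - b\|_2^2 + \kappa\|x - y\|_2^2\right],
\end{equation*}
where, after the standard-form transformation, $A$ denotes the effective (standard-form) operator and $L = I$. Two consequences follow at once. First, the suboptimal choice $x = y$ on the right gives $\|F_\kappa y - g_\kappa\|_2^2 \le \|Ay - b\|_2^2$ for every $y$; minimizing over the feasible set $\{\|y\|_p \le \tau\}$ yields $\phi_\kappa(\tau) \le \phi_\infty(\tau)$, which is the ``in particular'' claim. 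Second, solving the inner problem explicitly with minimizer $x^*(y) = H_\kappa^{-1}(A^Tb + \kappa y)$ and substituting back (or, equivalently, using the SVD of $F_\kappa$ derived earlier together with the expression for $g_\kappa$) produces the closed form $\|F_\kappa y - g_\kappa\|_2^2 = r^T(I + \kappa^{-1}AA^T)^{-1}r$ with $r = b - Ay$.

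First I would expand this in a Neumann series. Writing $A = U\Sigma V^T$, the operator $(I + \kappa^{-1}AA^T)^{-1}$ has eigenvalues $\kappa/(\sigma_i^2 + \kappa) = 1 - \sigma_i^2/\kappa + \mathcal{O}(\kappa^{-2})$, so that
\begin{equation*}
\|F_\kappa y - g_\kappa\|_2^2 = \|Ay - b\|_2^2 - \kappa^{-1}\|A^T(b - Ay)\|_2^2 + \mathcal{O}(\kappa^{-2}),
\end{equation*}
with a remainder that is uniform over the bounded feasible set. Evaluating the expansion at the relaxed minimizer $\overline{y}_{\kappa}$ and using that it is feasible (hence $\|A\overline{y}_{\kappa} - b\|_2^2 \ge \phi_\infty(\tau)^2$) gives the lower bound $\phi_\kappa(\tau)^2 - \phi_\infty(\tau)^2 \ge -\kappa^{-1}\|A^T(b - A\overline{y}_{\kappa})\|_2^2 + \mathcal{O}(\kappa^{-2})$, which already has the desired form. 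The matching upper bound comes from optimality of $\overline{y}_{\kappa}$ for the relaxed problem: $\phi_\kappa(\tau)^2 \le \|F_\kappa \overline{y}_\infty - g_\kappa\|_2^2 = \phi_\infty(\tau)^2 - \kappa^{-1}\|A^T(b - A\overline{y}_\infty)\|_2^2 + \mathcal{O}(\kappa^{-2})$, where $\overline{y}_\infty$ minimizes $\phi_\infty$.

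The main obstacle is to show that these two bounds agree to order $\mathcal{O}(\kappa^{-2})$, i.e. that the discrepancy between the two minimizers enters only at second order. Let $\delta = \overline{y}_{\kappa} - \overline{y}_\infty$. Combining the bounds gives
\begin{equation*}
0 \le \|A\overline{y}_{\kappa} - b\|_2^2 - \phi_\infty(\tau)^2 \le \kappa^{-1}\left(\|A^T(b - A\overline{y}_{\kappa})\|_2^2 - \|A^T(b - A\overline{y}_\infty)\|_2^2\right) + \mathcal{O}(\kappa^{-2}),
\end{equation*}
and since $A^T(b - A\overline{y}_{\kappa}) - A^T(b - A\overline{y}_\infty) = -A^TA\delta$, the right-hand side is of order $\kappa^{-1}\|A\delta\|_2 + \mathcal{O}(\kappa^{-2})$. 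On the other hand, because $J_\infty(y) = \|Ay - b\|_2^2$ is a convex quadratic and $\overline{y}_\infty$ is its constrained minimizer, the exact second-order expansion together with the first-order optimality inequality $\langle \nabla J_\infty(\overline{y}_\infty), \delta\rangle \ge 0$ (valid for $p \ge 1$, where the feasible set is convex) gives the lower bound $\|A\overline{y}_{\kappa} - b\|_2^2 - \phi_\infty(\tau)^2 \ge \|A\delta\|_2^2$. Putting these together produces the self-bounding inequality $\|A\delta\|_2^2 \le C\kappa^{-1}\|A\delta\|_2 + \mathcal{O}(\kappa^{-2})$, which forces $\|A\delta\|_2 = \mathcal{O}(\kappa^{-1})$. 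Consequently $\|A^T(b - A\overline{y}_{\kappa})\|_2^2 = \|A^T(b - A\overline{y}_\infty)\|_2^2 + \mathcal{O}(\kappa^{-1})$ and $\|A\overline{y}_{\kappa} - b\|_2^2 = \phi_\infty(\tau)^2 + \mathcal{O}(\kappa^{-2})$; substituting the latter into the expansion evaluated at $\overline{y}_{\kappa}$ closes the estimate and yields the stated equality. I expect this self-bounding step --- controlling $\|A\delta\|_2$ rather than $\delta$ itself, so that neither injectivity of $A$ nor strong convexity is needed --- to be the crux; the nonconvex regime $p < 1$ would require replacing the first-order optimality inequality with a suitable stationarity argument.
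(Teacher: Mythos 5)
Your proof is correct, and it reaches the result by a genuinely different route than the paper's. Both arguments start from the same partial-minimization identity $\|F_\kappa y - g_\kappa\|_2^2 = \min_x\bigl(\|Ax-b\|_2^2 + \kappa\|x-y\|_2^2\bigr)$ in standard form, but the paper then treats $f(\epsilon) = \bigl(\phi_{\epsilon}(\tau)\bigr)^2$ as a function of the parameter $\epsilon = \kappa^{-1}$, invokes an envelope-theorem argument to get $f'(\epsilon) = -\epsilon^{-2}\|\overline{x}_\epsilon - \overline{y}_\epsilon\|_2^2 \le 0$ (monotonicity follows via the mean value theorem), and Taylor-expands, using $\overline{x}_\epsilon - \overline{y}_\epsilon = \epsilon A^T(b-A\overline{y}_\epsilon) + \mathcal{O}(\epsilon^2)$ to identify the leading term. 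You never differentiate in the parameter: you write the partially minimized objective in the exact closed form $r^T(I+\kappa^{-1}AA^T)^{-1}r$, expand the resolvent uniformly over the bounded feasible set, and sandwich $\phi_\kappa(\tau)^2$ by cross-evaluating each minimizer in the other problem, closing the gap between the two bounds with the self-bounding inequality $\|A\delta\|_2^2 \le C\kappa^{-1}\|A\delta\|_2 + \mathcal{O}(\kappa^{-2})$; your derivation of $\phi_\kappa \le \phi_\infty$ (take $x=y$ in the inner minimization) is likewise exact rather than asymptotic. What your route buys is rigor: the paper's proof tacitly assumes differentiability of the value function and of $\epsilon \mapsto (\overline{x}_\epsilon,\overline{y}_\epsilon)$ and control of the second-order Taylor remainder, none of which it justifies, whereas you need only the variational characterizations, exact resolvent algebra (valid for $\kappa > \|A\|_2^2$, matching the paper's own expansion condition), and convexity of the feasible set. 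The price is the explicit restriction to $p\ge 1$ for the first-order optimality inequality $\langle \nabla J_\infty(\overline{y}_\infty),\delta\rangle \ge 0$, which you correctly flag; the paper's formal computation is indifferent to convexity but would face analogous regularity issues for $p<1$.
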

\begin{proof}
	Let $\epsilon = \kappa^{-1}$. The relaxed value function can be expressed as
\[
	\phi_{\epsilon}(\tau) = \min_y \|F_{\epsilon}y - g_{\epsilon}\|_2 \quad \text{s.t.} \quad \|y\|_p \leq \tau.
\]
For $\epsilon < \|A\|^2_2$ we can expand $H_{\epsilon}^{-1} = \epsilon I - \epsilon^2 A^TA + \mathcal{O}(\epsilon^{3})$ and get
\[
	F_{\epsilon} = \left(\begin{matrix}A - \epsilon AA^TA + \mathcal{O}(\epsilon^2) \\ \epsilon^{1/2}A^TA + \mathcal{O}(\epsilon^{3/2})\end{matrix}\right), \quad
g_{\epsilon} = \left(\begin{matrix}b - \epsilon A^Tb + \mathcal{O}(\epsilon^{2}) \\ \epsilon^{-1/2}A^Tb + \mathcal{O}(\epsilon^{3/2})\end{matrix}\right).
\]
Introduce
\[
	f(\epsilon) = \left(\phi_\epsilon(\tau)\right)^2 = \min_{x,y} \|Ax -  b\|_2^2 + \epsilon^{-1}\|x - y\|_2^2 \quad \text{s.t.}\quad \|y\|_{p} \leq \tau.
\]
We have $f(0) = \min_{\|y\|_{p} \leq \tau} \|Ay - b\|_2^2 = \left(\phi_0(\tau)\right)^2$. Furthermore
\[
	f'(\epsilon) = -\epsilon^{-2}\|\overline{x}_{\epsilon} - \overline{y}_{\epsilon}\|_2^2,
\]
where $\overline{x}_{\epsilon} = H_{\epsilon}^{-1}(A^Tb + \epsilon^{-1}\overline{y}_{\epsilon})$ and $\overline{y}_{\epsilon}$ is the optimal $y$.
With this we find
\begin{equation}
	\left(\phi_\epsilon(\tau)\right)^2 - \left(\phi_0(\tau)\right)^2 = \epsilon f'(\eta) = -\epsilon\eta^{-2} \|\overline{x}_{\eta} - \overline{y}_{\eta}\|_2^2.
\end{equation}
We conclude that $\phi_{\epsilon}(\tau) \leq \phi_0(\tau)$. Alternatively, we can express
\begin{equation}
	\left(\phi_\epsilon(\tau)\right)^2 - \left(\phi_0(\tau)\right)^2 = -\epsilon^{-1} \|\overline{x}_{\epsilon} - \overline{y}_{\epsilon}\|_2^2 + \mathcal{O}(\epsilon^2).
\label{eq:pareto_distance}
\end{equation}
For small $\epsilon$ we get
\[
	f'(\epsilon) = -\|A^T(b - A\overline{y}_{\epsilon})\|_2^2 + \mathcal{O}(\epsilon).
\]
Plugging this expression into \cref{eq:pareto_distance} gives the desired result.
\end{proof}

\begin{remark}
\cref{thm:valuefunction} can be used to explain the behaviour of the Pareto curves observed in the examples in section \ref{motivating_examples}:
\begin{itemize}
\item The error gets smaller for large $\tau$. For an unconstrained problem we have ${\Vert A^T(b - A\overline{y}_{\kappa})\Vert_2 = 0}$ as $\kappa \rightarrow\infty$. An example is shown in \cref{fig:pareto2_l1}.
\item The elbow of the Pareto curves coincide; $\phi_\infty$ decreases fast initially for ill-posed problems (cf. \cref{fig:pareto_l1}) while $\phi_{\kappa}$ decreases less fast due to the implicit regularizating effect of the relaxation. Since $0 \leq \phi_\kappa \leq \phi_\infty$, the relaxed Pareto curve is pushed down and is therefore likely to have the elbow at the same location as $\phi_\infty$.
\end{itemize}

\end{remark}

\begin{figure}
\centering
\includegraphics[scale=.4]{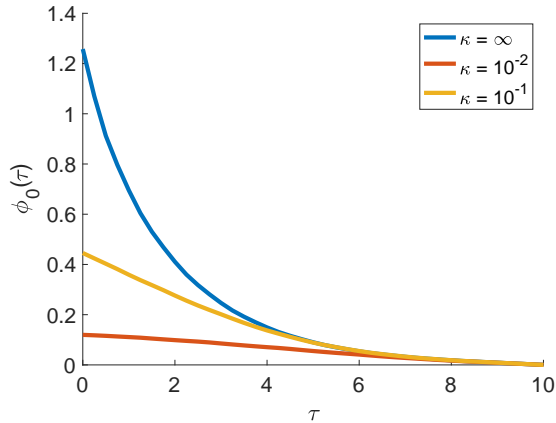}
\caption{Pareto curve for an ill-posed problem; the matrix $A$ is diagonal with elements $e^{-(i - 1)/2}$ for $i = 1,2, \ldots 10$; $b = Ax$ with $x = (1, 1, \ldots, 1)$. The approximations for various values of $\epsilon$ are shown as well.}
\label{fig:pareto2_l1}
\end{figure}

\section{Implementation}
\label{sec:implementation}
Recall from the introduction that we implement SR3 as follows:
\begin{eqnarray}
	x_{k+1} &\leftarrow& \left(A^T\!A + \kappa L^T\!L\right)^{-1} \left(A^Tb + \kappa L^Ty_k\right) \\
	y_{k+1} &\leftarrow& \text{prox}_{1/\kappa \mathcal{R}} \left(Lx_{k}\right).
\end{eqnarray}
The last equation shows that for the choice $\mathcal{R}(\cdot) = \lambda\Vert\cdot\Vert_p^p$ there is a relation between the parameters $\kappa$ and $\lambda$. More specifically, $\lambda$ depends on $\kappa$ and hence we write $\lambda(\kappa)$. Given the optimal $\lambda_{\star}$, we have $\lambda(\kappa) = \lambda_{\star}\cdot\kappa$. Note that if we use the constrained formulation \cref{eq:lstau}, the dependence on the stepsize is lost because the proximal operator is the indicator function, and there is no relation between $\tau$ and $\kappa$. \\
The computational bottleneck is in the first step, which is the solution to the large-scale linear system
\begin{equation}
	\left( A^T\!A + \kappa L^T\!L\right)x_{k} = A^Tb + \kappa L^Ty_{k-1}.
\label{eq:inner_system}
\end{equation}
To avoid explicitly forming $A^T\!A$ and $L^T\!L$, we instead solve the following minimization problem
\begin{equation}
	\min_{x} \left\Vert\begin{bmatrix} A \\ \sqrt{\kappa} L\end{bmatrix}x - \begin{bmatrix} b \\ \sqrt{\kappa}y_{k-1} \end{bmatrix}\right\Vert_2^2,
\label{eq:ls_system}
\end{equation}
with LSQR. \\
 We will numerically investigate how only partially solving \cref{eq:ls_system} affects the convergence of SR3. This has been investigated for ADMM in \cite{Eckstein2017,Eckstein2018,MarquesAlves2020}. The convergence of FISTA with an inexact gradient has been analyzed in \cite{Schmidt2011}. The key message is that the error has to go down as the iterations increase. \\
In our implementation, we propose two extra ingredients to make SR3 suitable for large-scale problems: warm starts and inexact solves of \eqref{eq:ls_system}. Both ingredients are also used in the implementation of ADMM \cite{Boyd2011}. However, we propose a new stopping criterion for the inexact solves of \eqref{eq:ls_system}. \\
A \textit{warm start} is a technique used in inner-outer schemes, where the solution of the previous inner iteration serves as an initial guess to the new inner iteration. That is, we solve
\begin{equation}
	\min_{x} \left\Vert\begin{bmatrix} A \\ \sqrt{\kappa} L\end{bmatrix}x - \left(\begin{bmatrix} b \\ \sqrt{\kappa}y_{k-1} \end{bmatrix} - \begin{bmatrix} A \\ \sqrt{\kappa} L\end{bmatrix}x_{k-1} \right) \right\Vert_2^2.
\label{eq:ls_system_warm_start}
\end{equation}
By \textit{inexact solves} we mean finding an approximate solution to \eqref{eq:ls_system_warm_start}. The level of inexactness is determined by the difference between the true solution and the inexact solution. There are various ways in which one can solve the optimization problem inexactly. One way is to simply determine a maximum number of iterations. However, the number of iterations to solve \eqref{eq:ls_system} can vary strongly per outer iteration. Moreover, we may not want to solve the inner system with high precision in the first few outer iterations, because this does not result in significant improvement in the next outer iteration. Recently, the authors in \cite{vanLeeuwen2020nonsmooth} proposed a criterion to determine the amount of inexactness for inner-outer schemes. The idea is to stop the inner iteration once the difference in the resulting outer iterate becomes stagnant. Let $x_k$ denote the current inner iterate and $y_k = \text{prox}_{1/\kappa \mathcal{R}} \left(Lx_k\right)$ the resulting outer iterate by applying the proximal operator. Then the authors in \cite{vanLeeuwen2020nonsmooth} propose to stop the inner iterations if
\begin{equation}
	\Vert x_{k+1} - x_k \Vert < \rho \Vert y_{k+1} - y_k\Vert,
\end{equation}
for some user defined constant $\rho$. We propose a similar criterion, namely to stop if
\begin{equation}
	\dfrac{\Vert y_{k+1} - y_k\Vert}{\Vert y_k\Vert} < \epsilon,
\end{equation}
for some user defined threshold $\epsilon$. The index $k$ refers to the iteration of the iterative method applied to the inner iteration. This yields the proposed implementation of SR3, shown in \cref{alg:SR3}. Note that in line 4 of the algorithm we use the LSQR algorithm, and we build on the Krylov subspace from the previous step.
\begin{algorithm}
	\caption{Implementation of SR3}
	\label{alg:SR3_algorithm}
	\begin{algorithmic}[1]
		\Require Operators $A$ and $L$, the data $b$ and the parameters $\kappa$, $\lambda$ and $\epsilon$.
		\Ensure Approximate solution $x_k$.
		\While{$\Vert x_{k+1} - x_k\Vert > \delta$}
			\State $l = 0$.
			\While{$\dfrac{\Vert \tilde{y}_{l+1} - \tilde{y}_{l}\Vert}{\Vert \tilde{y}_l\Vert} > \epsilon$} \Comment{Run LSQR. We do \textbf{not} restart LSQR every iteration!}
				\State $x_l = \argmin_{x\in\mathcal{K}_l((A,\sqrt{\kappa}L),(b-Ax_k,\sqrt{\kappa}(y_0 - Lx_k)))}\left\Vert\begin{bmatrix} A \\ \sqrt{\kappa} L\end{bmatrix}x - \left(\begin{bmatrix} b \\ \sqrt{\kappa}y_{k} \end{bmatrix} - \begin{bmatrix} A \\ \sqrt{\kappa} L\end{bmatrix}x_{k} \right) \right\Vert_2^2$.
				\State $\tilde{y}_{l+1} = \text{prox}_{1/\kappa \mathcal{R}} \left(Lx_l\right)$. \Comment{Prospective update}
				\State $l = l + 1.$
			\EndWhile
			\State $y_k = \tilde{y}_{l+1}$.
			\vspace{1mm}
			\State $k = k + 1.$
 		\EndWhile
	\end{algorithmic}
\label{alg:SR3}
\end{algorithm} \\

It is important to note that the influence of $\kappa$ on the outer iteration is different from the influence of $\kappa$ on the inner iteration. The improved conditioning of the matrix $F_{\kappa}$ pertains to the convergence of the outer iteration. The convergence of the inner iteration is completely determined by the properties of the matrix $H_{\kappa}^{-1}$. It is important to note that using the GSVD of $(A,L)$ we get
\[
	H_{\kappa} = A^TA + \kappa L^TL = X^T\left(\Sigma^T\Sigma + \kappa \Gamma^T\Gamma\right)X,
\]
\textbf{but this is not the SVD of $H_{\kappa}$}, because $X$ is not orthonormal. Therefore, the matrix $\Sigma^T\Sigma + \kappa \Gamma^T\Gamma$ does not tell us anything about the convergence rate when solving linear systems involving $H_{\kappa}$.

\section{Numerical experiments}
\label{sec:experiments}
In this section we verify the results from \cref{sec:analysis} numerically. Furthermore, we implement \cref{alg:SR3} and test it on two examples. We use two examples that are regularized by TV regularization, which we solve in its constrained form, i.e.
\[
	\min_{x} \Vert Ax - b\Vert_2^2 \quad\text{ s.t. }\quad \Vert Lx\Vert_1\leq\tau.
\]
\subsection{Examples}
We will use two examples that are very different in nature in terms of their singular values. For both examples, we will show how their spectra are changed as a function of $\kappa$ by applying SR3, and how this relates to the inner and outer iterations. After that, we will show how our inexact SR3 greatly reduces the total number of iterations. We do not add noise to the data.
\subsubsection*{Gravity surveying}
The first example is the gravity example from the regu toolbox, \cite{Hansen2002,Hansen1994}. This example models gravity surveying. An unknown mass distribution that generates a gravity field is located in the subsurface, and the measured data is related to the gravity field via a Fredholm integral of the first kind, i.e.
\[
	b(s) = \displaystyle\int_{\Omega}k(s,t) x(t) dt.
\]
The variable $x(t)$ is the mass density at the location $t$ in the subsurface and $b(s)$ is the gravity field at location $s$ at the surface. The kernel is given by:
\[
	k(s,t) = d(d^2 + (s-t)^2)^{-3/2},
\]
where $d$ is the depth. The integral is discretized using the midpoint quadrature rule and yields a symmetric Toeplitz matrix $A$ that is square and severely ill-posed. We have chosen an $x(t)$ that is piecewise constant and hence we regularize the problem with TV regularization. The operator $L = D$, where $D$ is the first-order finite difference discretization, i.e.
\[
	D = \begin{bmatrix} -1 & 1 & & \\  &\ddots & \ddots & \\ & & -1 & 1\end{bmatrix} \in\mathbb{R}^{(n-1)\times n}.
\]
The operator is underdetermined and its nullspace has dimension 1. We choose $n = 512$. The true gravity profile is shown in \cref{fig:shepp-logan}.

\subsubsection*{Tomography}
Our second example is the tomography example PRtomo from the IR Tools toolbox \cite{Gazzola2020}, see also \cite{Hansen2018}, which models parallel tomography. It models X-ray attenuation tomography, often referred to as computerized tomography (CT). Parallel rays at different angles penetrate an object. The rays are attenuated at a rate proportional to the length of the ray and the density of the object. The $i$-th ray can be modeled as
\[
	b_i = \sum_{j\in\mathcal{S}_i} a_{ij}x_j.
\]
The set $\mathcal{S}$ denotes the set of pixels that are penetrated, $a_{ij}$ denotes the length of the $i$-th ray through the $j$-th pixel and $x_j$ is the attenuation coefficient. This is a 2D example where the matrix $A$ is underdetermined and the singular values decay mildly. Again, we use TV regularization for the reconstruction. For 2D regularization, the operator $L = \begin{bmatrix} I \otimes D \\ D \otimes I\end{bmatrix}$. Hence, the operator $L$ is overdetermined and has a nullspace of dimension 1. We choose 18 angles between 0 and 180 degrees and discretize the image on a $128\times 128$-pixel grid. This means that $A\in\mathbb{R}^{3258\times 16384}$. Our experiments are on the Shepp-Logan phantom, shown in \cref{fig:shepp-logan}. 
\begin{figure}[!h]
\centering
\begin{tabular}{cc}
	\includegraphics[width=0.4\textwidth]{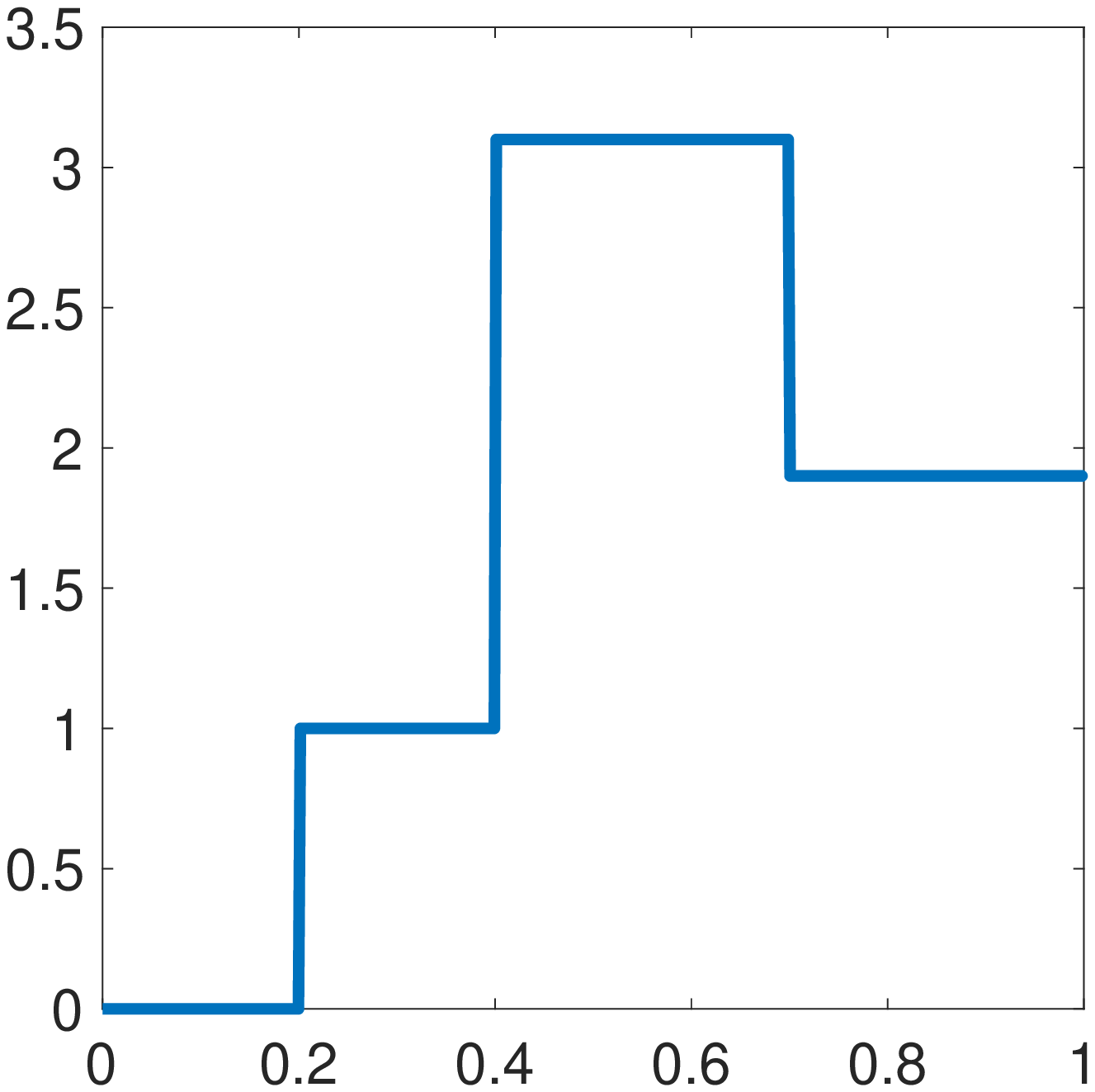} &
	\includegraphics[width=0.4\textwidth]{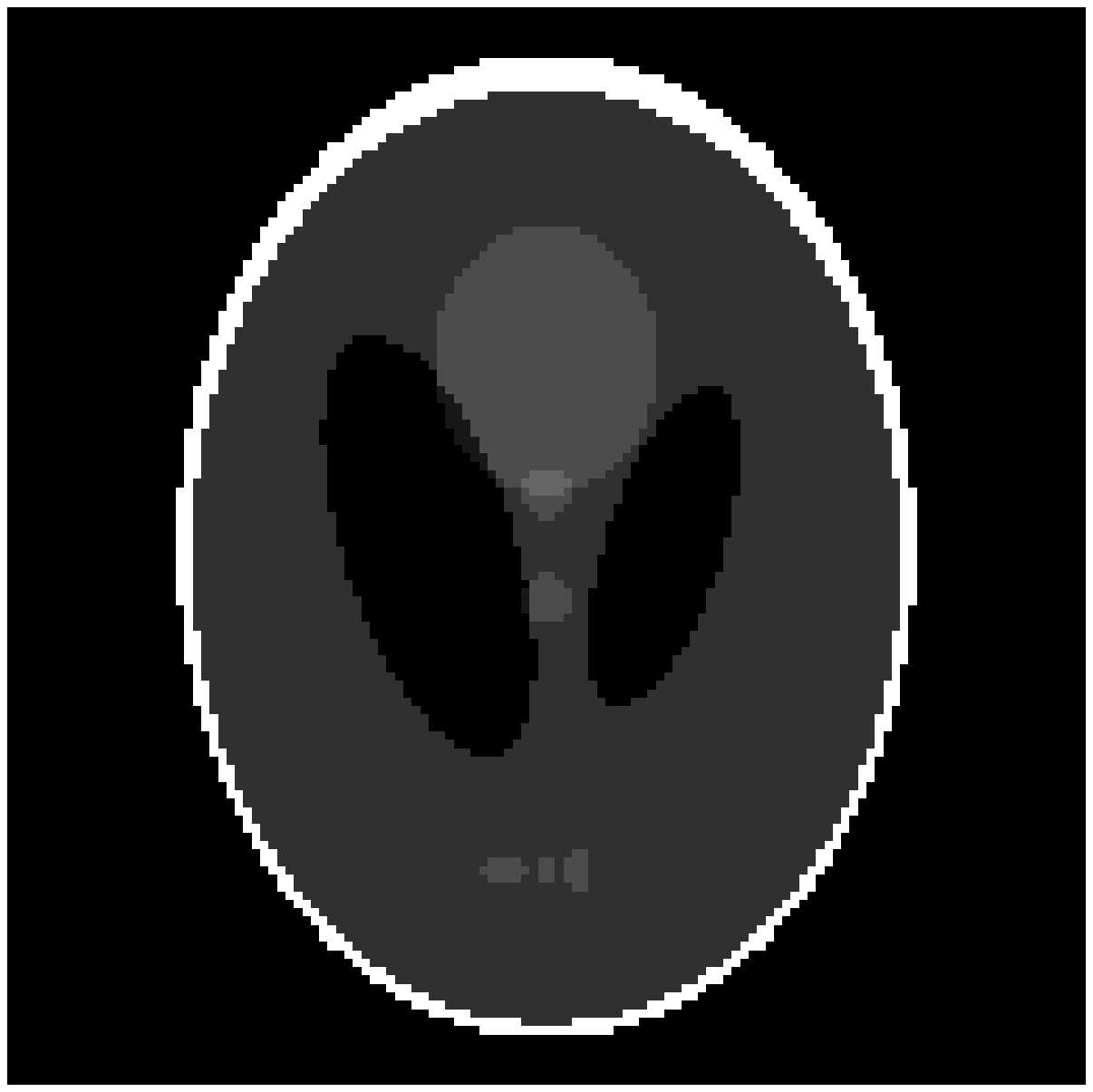} \\
	Gravity profile & Shepp-Logan phantom
\end{tabular}
\caption{}
\label{fig:shepp-logan}
\end{figure}

\subsubsection*{Parameters}
For our experiments, we have adapted the implementation of the accelerated proximal gradient algorithm from \cite{ODonoghue2016} for SR3 and use the same stopping criterion for the proximal gradient algorithm. For the inexact stopping criterion for the inner iteration we choose $\epsilon  = 10^{-6}$. For the exact SR3 method, we let LSQR run to convergence with the standard tolerance of $10^{-6}$. For $\tau$, we choose the optimal value $\tau = \Vert Lx_{\text{true}}\Vert_1$.

\subsection{Singular values of $F_{\kappa}$}
In this section we show the singular values of $F_{\kappa}$ for the gravity and the tomography example. For the tomography example, the generalized singular values are calculated on a $64\times 64$ grid to reduce computational time, instead of the $128\times 128$ grid for our experiments. We show the generalized singular values , i.e. the singular values of $AL_{A}^{\dagger}$, and the singular values of $F_{\kappa}$ for different values of $\kappa$ for the gravity example in \cref{fig:gravity-svd}. 
\begin{figure}[!h]
	\centering
	\begin{tabular}{cc}
		\includegraphics[width=0.4\linewidth]{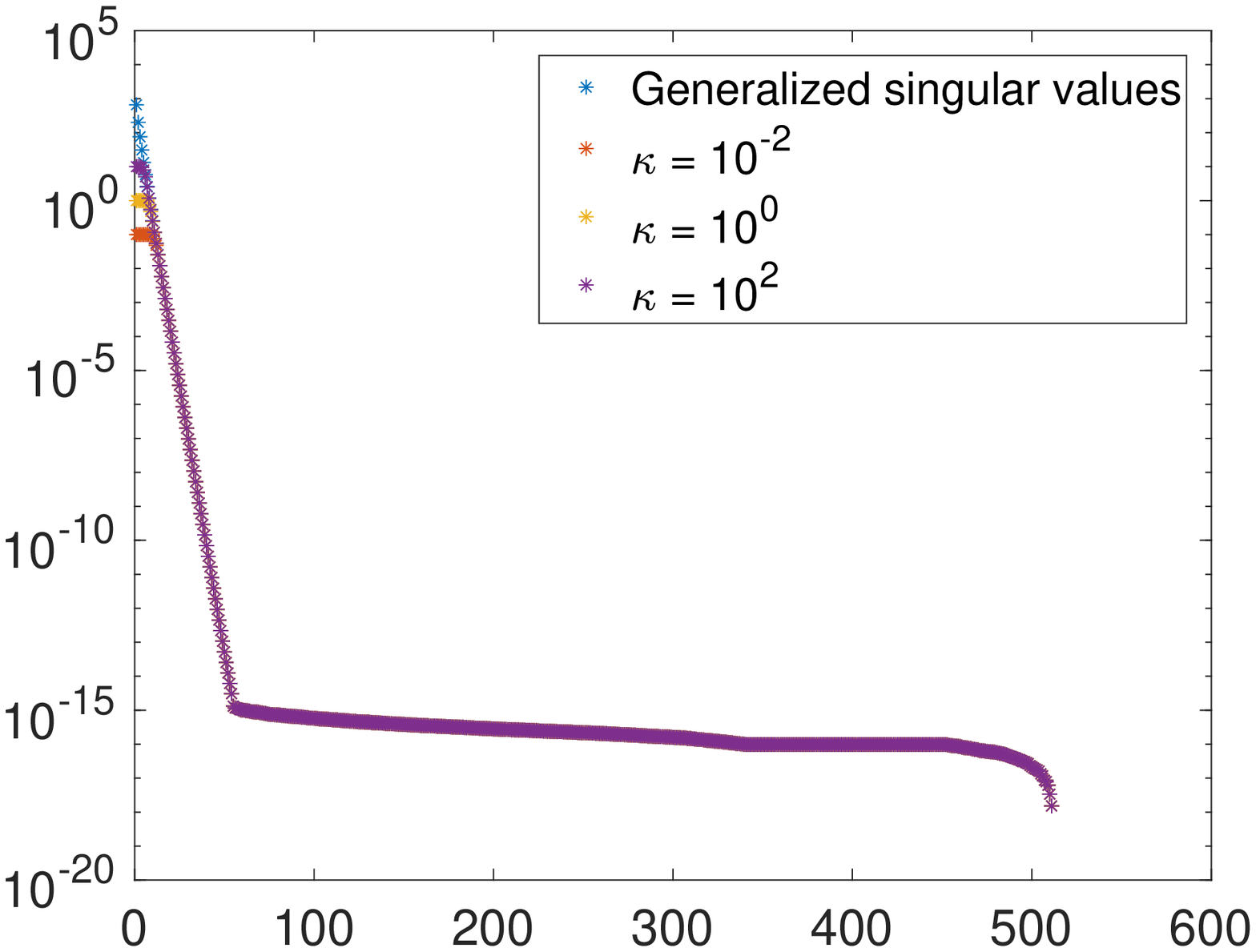} &
		\includegraphics[width=0.4\linewidth]{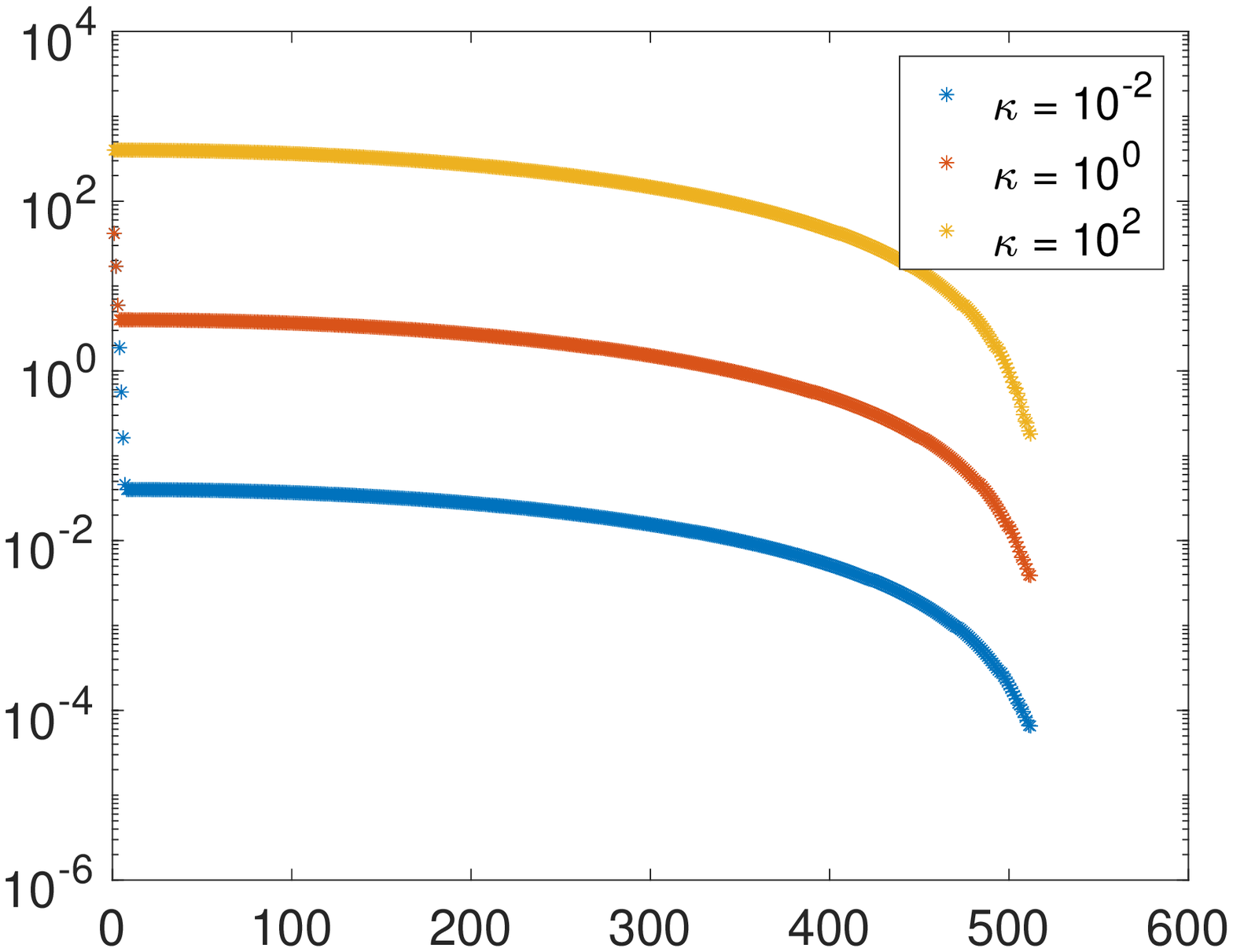} \\
		Singular values of $F_{\kappa}$ and $AL_{A}^{\dagger}$. & Singular values of $H_{\kappa}$.
	\end{tabular}
\caption{Spectral properties of $F_{\kappa}$ and $H_{\kappa}$ for the gravity example. Left figure: We show the singular values of $AL_{A}^{\dagger}$ and the singular values of $F_{\kappa}$ for different values of $\kappa$. Note that the singular values of $F_{\kappa}$ have a very similar structure to the singular values of $AL_{A}^{\dagger}$. Right figure: The singular values of the matrix $H_{\kappa}$.}
\label{fig:gravity-svd}
\end{figure}
Note that irrespective of the value of $\kappa$, the matrix $F_{\kappa}$ remains severely ill-posed. For the tomography exmaple, $A$ is not severely ill-posed. The singular values decay only mildly and the situation is different. In this case, for small $\kappa$,
\[
	\psi_i\left(F_{\kappa}\right) = \sqrt{\dfrac{\sigma_{r-i+1}^2}{\sigma_{r-i+1}^2/\kappa+ \gamma_{r-i+1}^2}} \approx \sqrt{\dfrac{\sigma_{r-i+1}^2}{\sigma_{r-i+1}^2/\kappa}} = \sqrt{\kappa}.
\]
Hence, for small $\kappa$ the singular values of $F_{\kappa} \approx\sqrt{\kappa}$ and the condition number is 1. As $\kappa\to\infty$ we have seen that $\psi_i(F_{\kappa})\to \dfrac{\sigma_{r-i+1}}{\gamma_{r-i+1}}$. We show the singular values, the generalized singular values, and the singular values of $F_{\kappa}$ in \cref{fig:tomo-svd}. Note that for this example, the conditioning of the matrix $F_{\kappa}$ is improved.
\begin{figure}[!h]
	\centering
	\begin{minipage}{\textwidth}
		\begin{tabular}{cc}
			\includegraphics[width=0.4\linewidth]{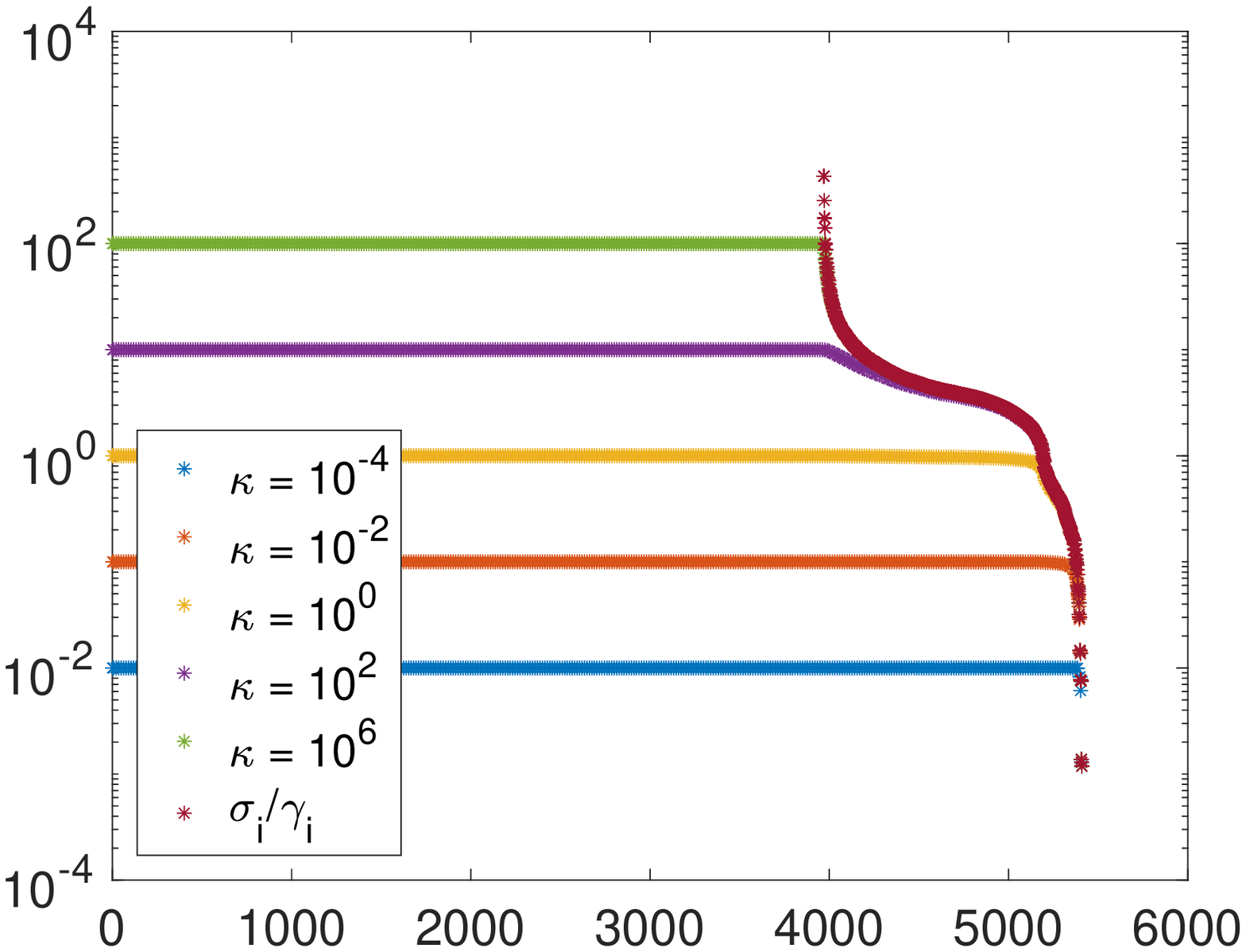} &
			\includegraphics[width=0.4\linewidth]{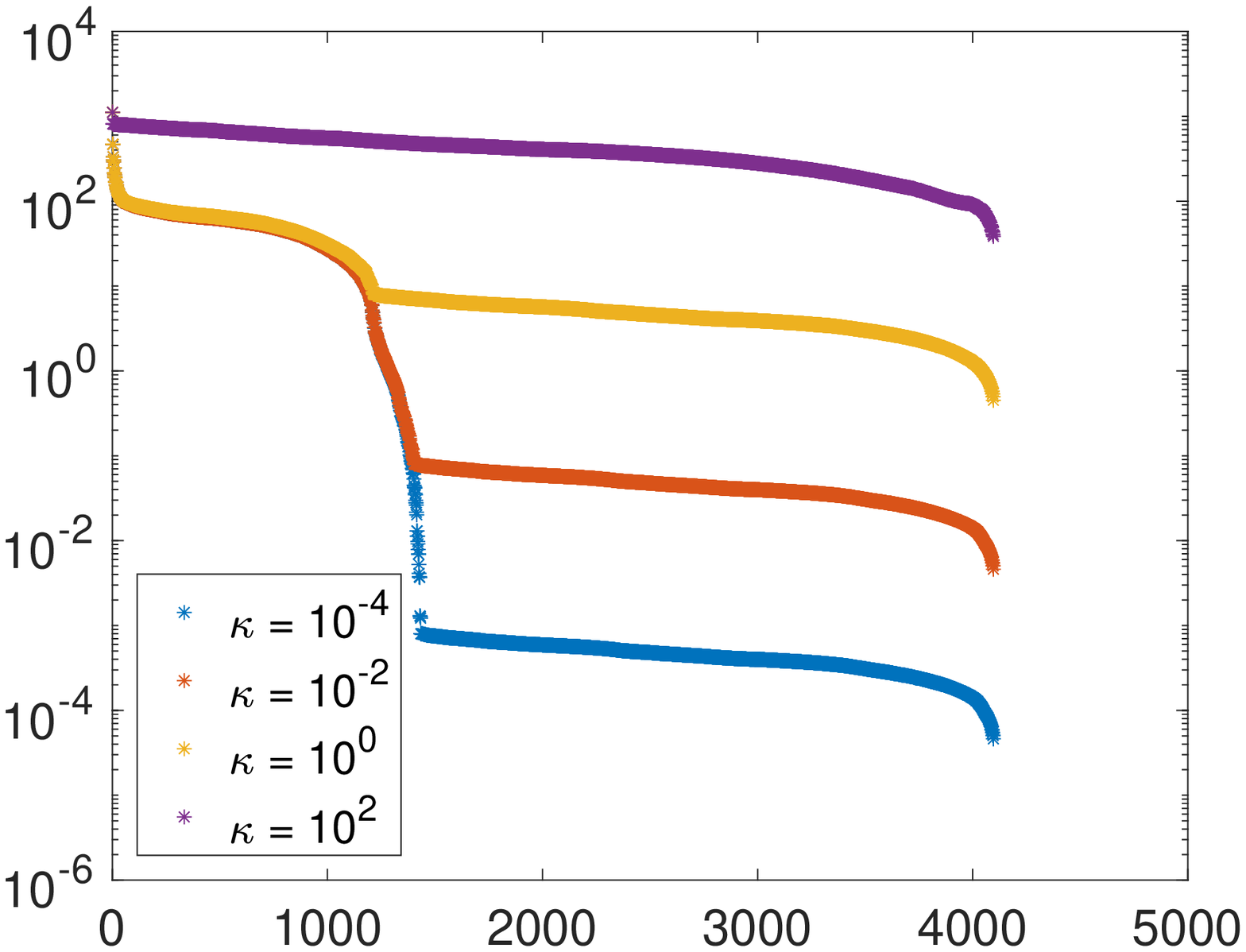} \\
			Singular values of $F_{\kappa}$.\footnote{\label{footnote1}The matrix is numerically rank deficient and we have truncated the SVD.} & Singular values of $H_{\kappa}$. \\
		\end{tabular}
	\end{minipage}
\caption{Spectral properties of $F_{\kappa}$ and $H_{\kappa}$ for the tomography example. The left figure shows the singular values of $F_{\kappa}$. Recall that the first $p-r_{L}$ singular values of $F_{\kappa}$ are $\sqrt{\kappa}$. The right figure shows the singular values of $H_{\kappa}$. There is an inverse relation between the condition number of $H_{\kappa}$ and $F_{\kappa}$ as a function of $\kappa$.}
\label{fig:tomo-svd}
\end{figure}

\subsection{The Pareto curves}
In figure \cref{fig:pareto-curve} we show the Pareto curves for the original problem and SR3 for both our examples.
\begin{figure}[!h]
	\centering
	\begin{tabular}{cc}
	\includegraphics[width=0.4\linewidth]{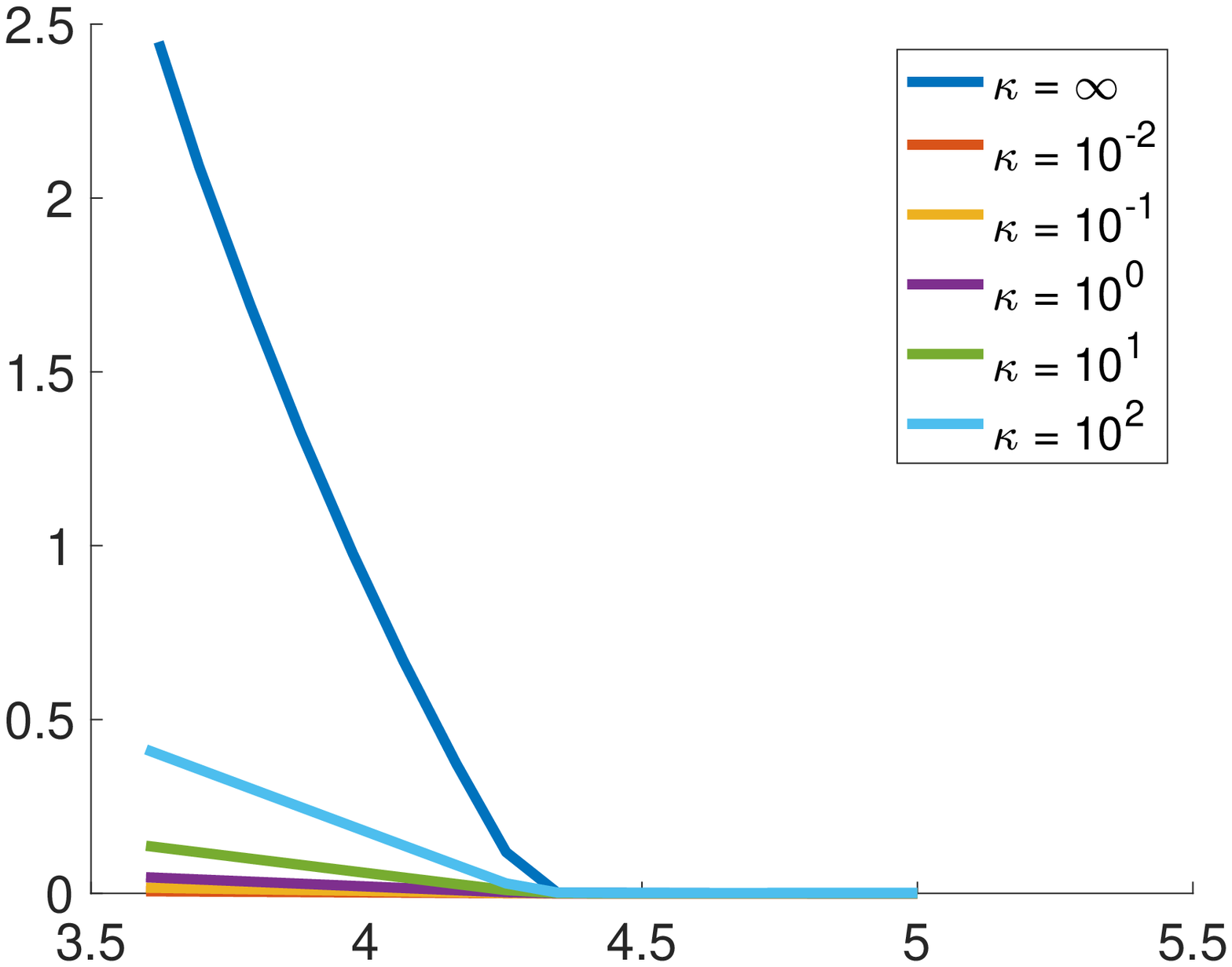} &
	\includegraphics[width=0.4\linewidth]{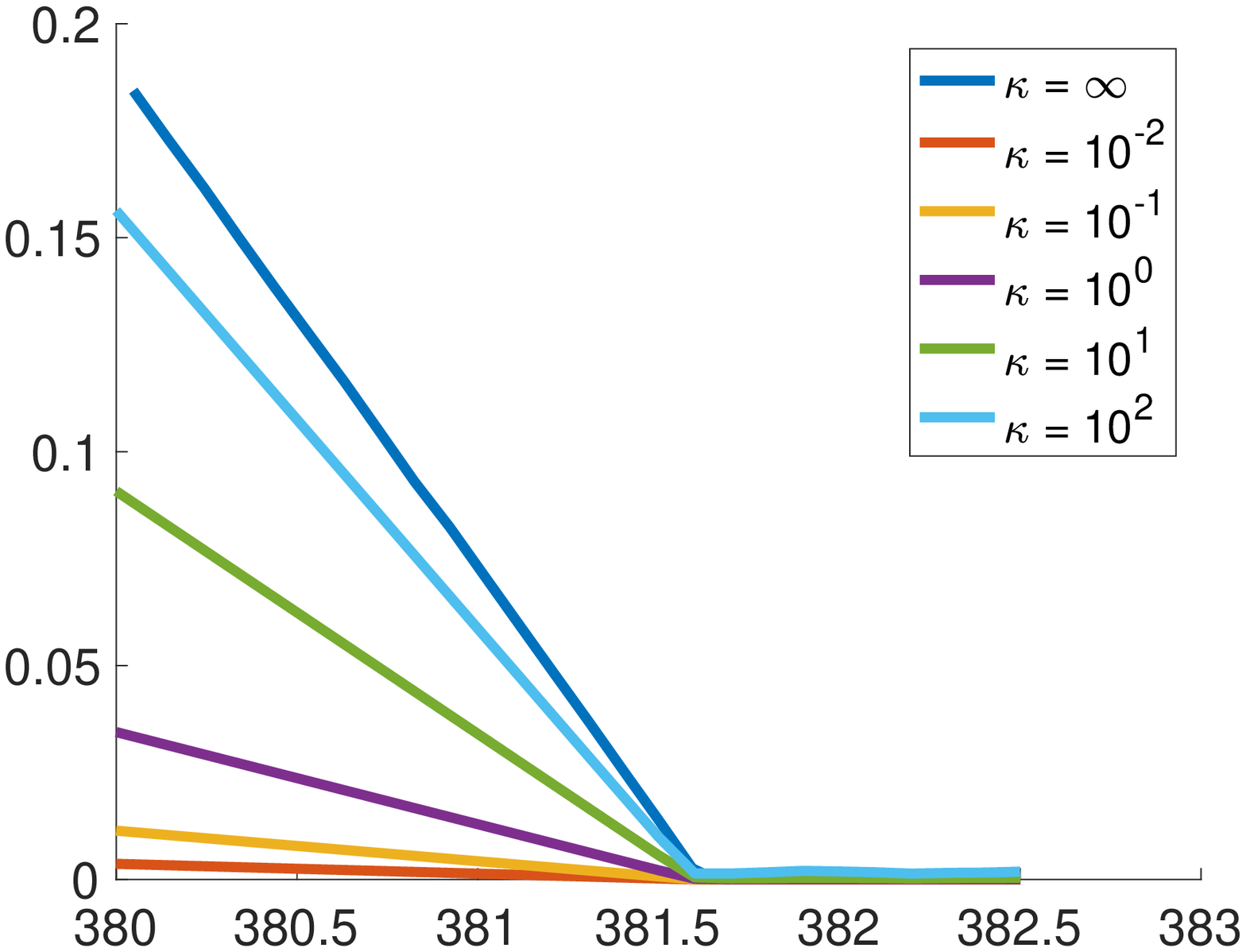} \\
	Pareto curves for the gravity example. & Pareto curves for the tomography example.
	\end{tabular}
	\caption{The left figure shows the Pareto curves for the gravity example. The right figure shows the Pareto curves for the tomography example. The x-axis is $\tau$ and the y-axis is $\Vert A\bar{x}_{\kappa} - b\Vert_2$.}
	\label{fig:pareto-curve}
\end{figure}
As we explained in section \cref{sec:value_function}, the corner of the Pareto of the original problem and SR3 is likely to be in the same place. This is confirmed by figure \cref{fig:pareto-curve}.

\subsection{The influence of $\kappa$ on the number of iterations}
To investigate the influence of $\kappa$, we show the amount of inner and outer iterations for varying values of $\kappa$ and the total number of iterations. The results are shown in \cref{fig:iterations-gravity} and \cref{fig:iterations-tomo}. As we have stated before, the improved convergence rate due to an improved conditioning of $\kappa$ pertains to the outer iterations. The effect of $\kappa$ on the convergence of the inner iteration may be completely opposite. \\
\begin{figure}[!h]
	\centering
	\begin{tabular}{cc}
	\includegraphics[width=0.4\linewidth]{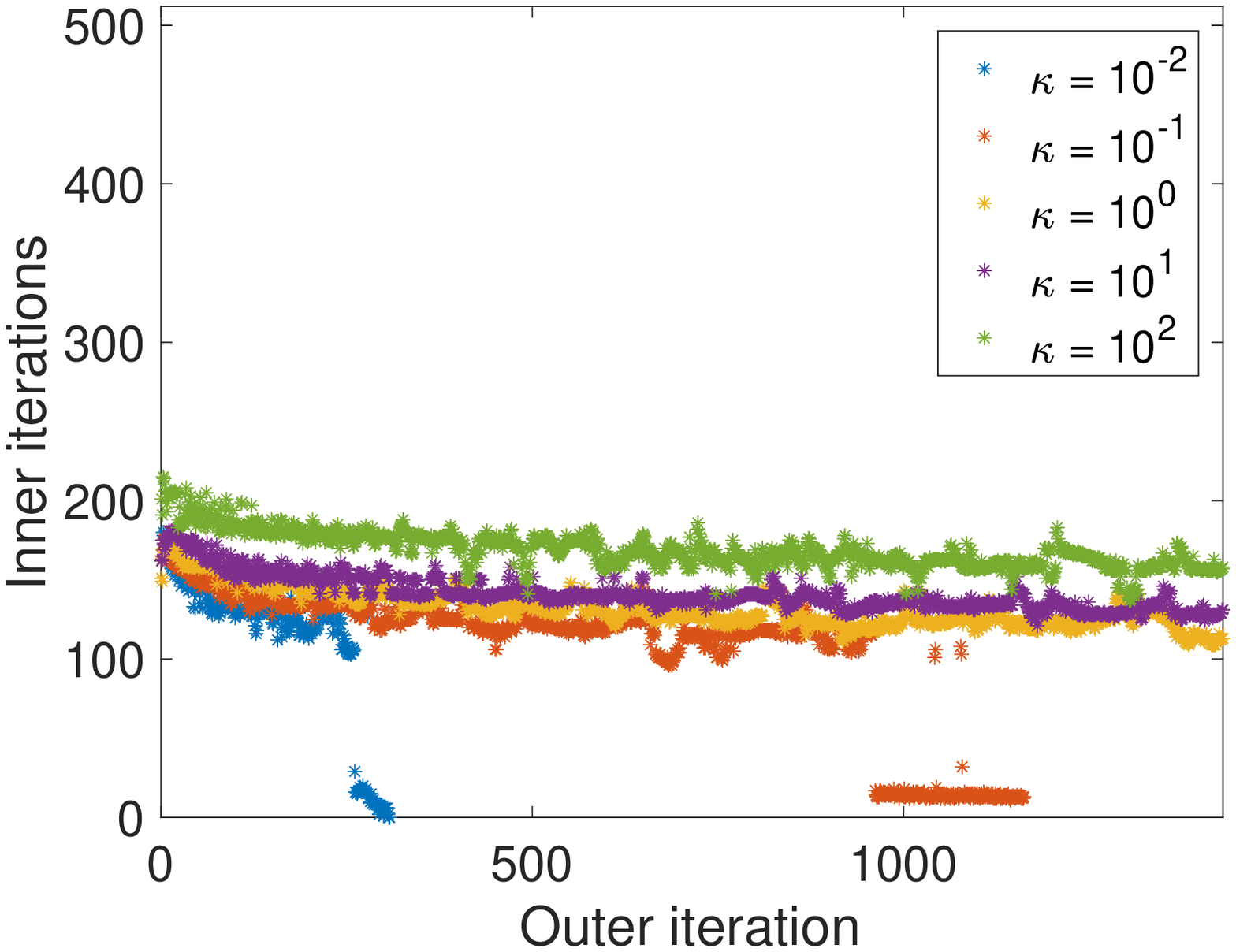} &
	\includegraphics[width=0.4\linewidth]{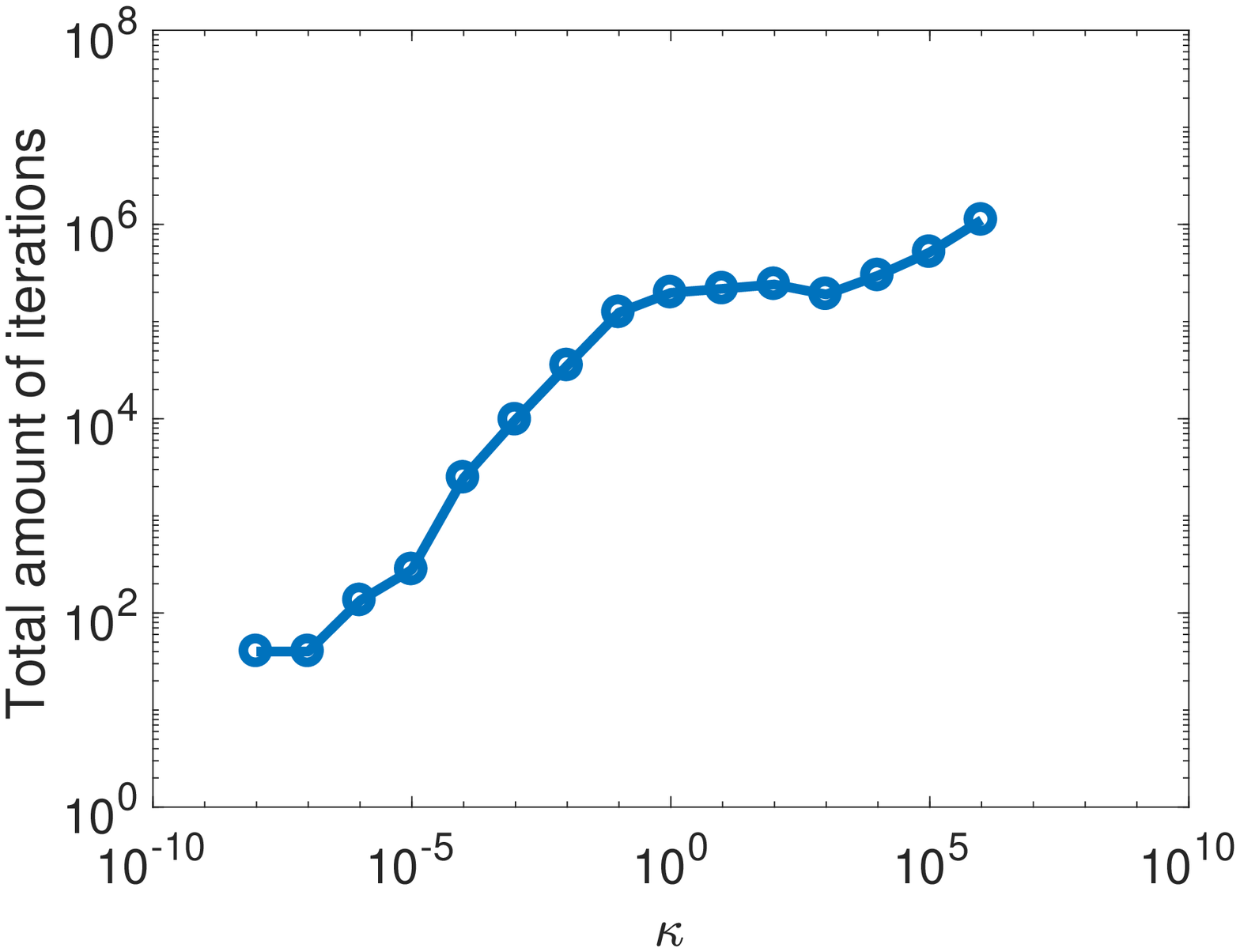} \\
	Inner iterations versus outer iterations. & Total number of iterations.
	\end{tabular}
	\caption{The left figure shows the inner and outer iterations for varying $\kappa$ for the gravity example. The right figure shows the total number of iterations.}
	\label{fig:iterations-gravity}
\end{figure}
\begin{figure}[!h]
	\centering
	\begin{tabular}{cc}
	\includegraphics[width=0.4\linewidth]{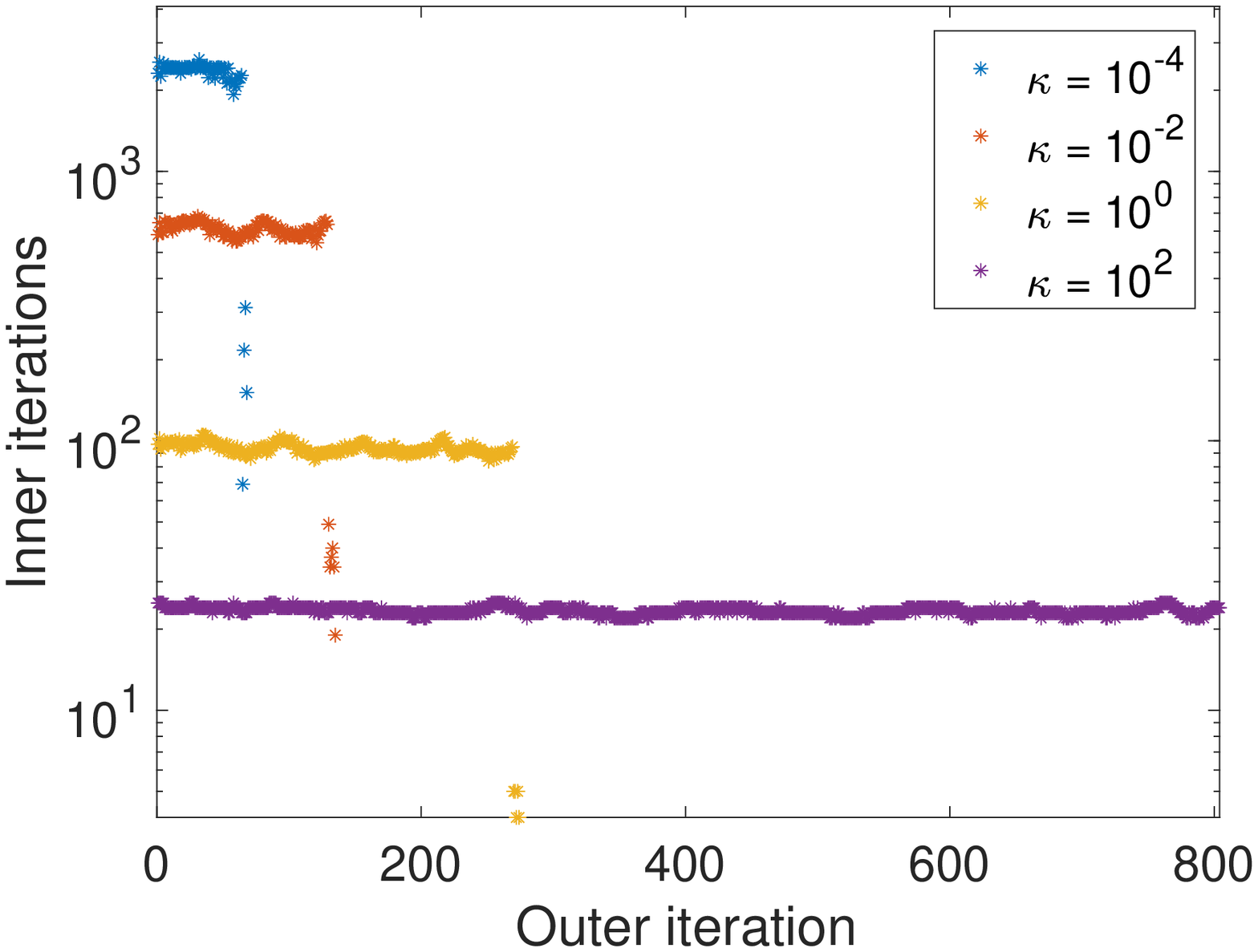} &
	\includegraphics[width=0.4\linewidth]{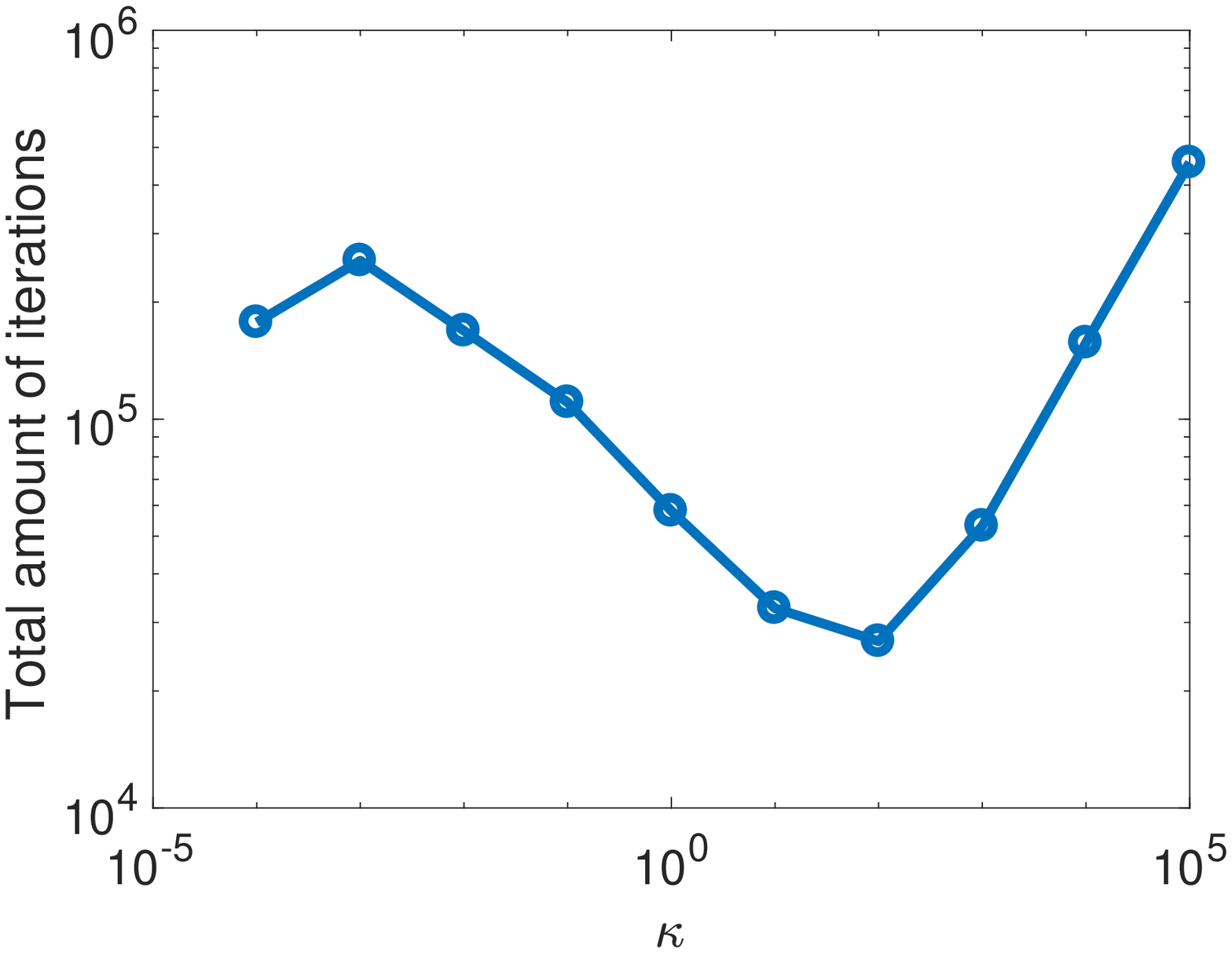} \\
	Inner iterations versus outer iterations. & Total number of iterations.
	\end{tabular}
	\caption{The left figure shows the inner and outer iterations for varying $\kappa$ for the tomography example. The right figure shows the total number of iterations.}
	\label{fig:iterations-tomo}
\end{figure}
For the gravity example, we see that the amount of inner iterations varies very little as $\kappa$ increases, and even goes up a little bit. This is not unexpected, because the decay of the singular values changes very little as $\kappa$ increases, see \cref{fig:gravity-svd}. The amount of outer iterations goes down rapidly as $\kappa$ decreases, something that is not expected from the distribution of the singular values. This shows that the distribution of the singular values is not the sole property explaining the convergence behavior. \\
For the tomography example we see a clear trade-off between inner and outer iterations. From \cref{fig:tomo-svd} we clearly see that as the condition number of $F_{\kappa}$ decreases, the condition number of $H_{\kappa}$ increases. This explains that, as the amount of inner iterations goes down with increasing $\kappa$, the amount of outer iterations goes down.

\subsection{Inexact SR3}
In this section we compare the error and the total number of iterations for SR3 and inexact SR3 as a function of $\kappa$. The results are shown in \cref{fig:inexact_SR3}.
\begin{figure}[!h]
	\centering
	\begin{tabular}{cc}
		\includegraphics[width=0.4\linewidth]{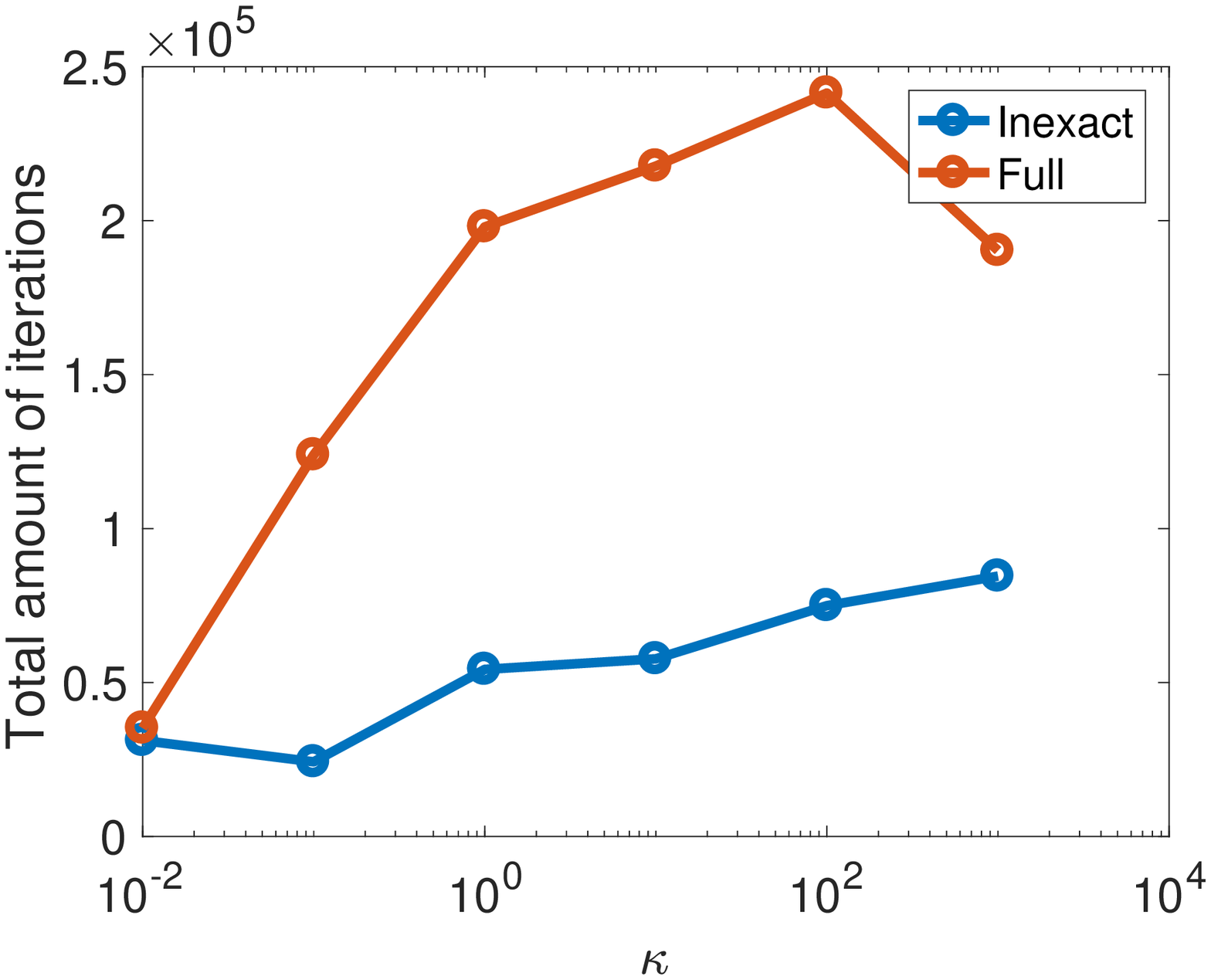} &	 \includegraphics[width=0.4\linewidth]{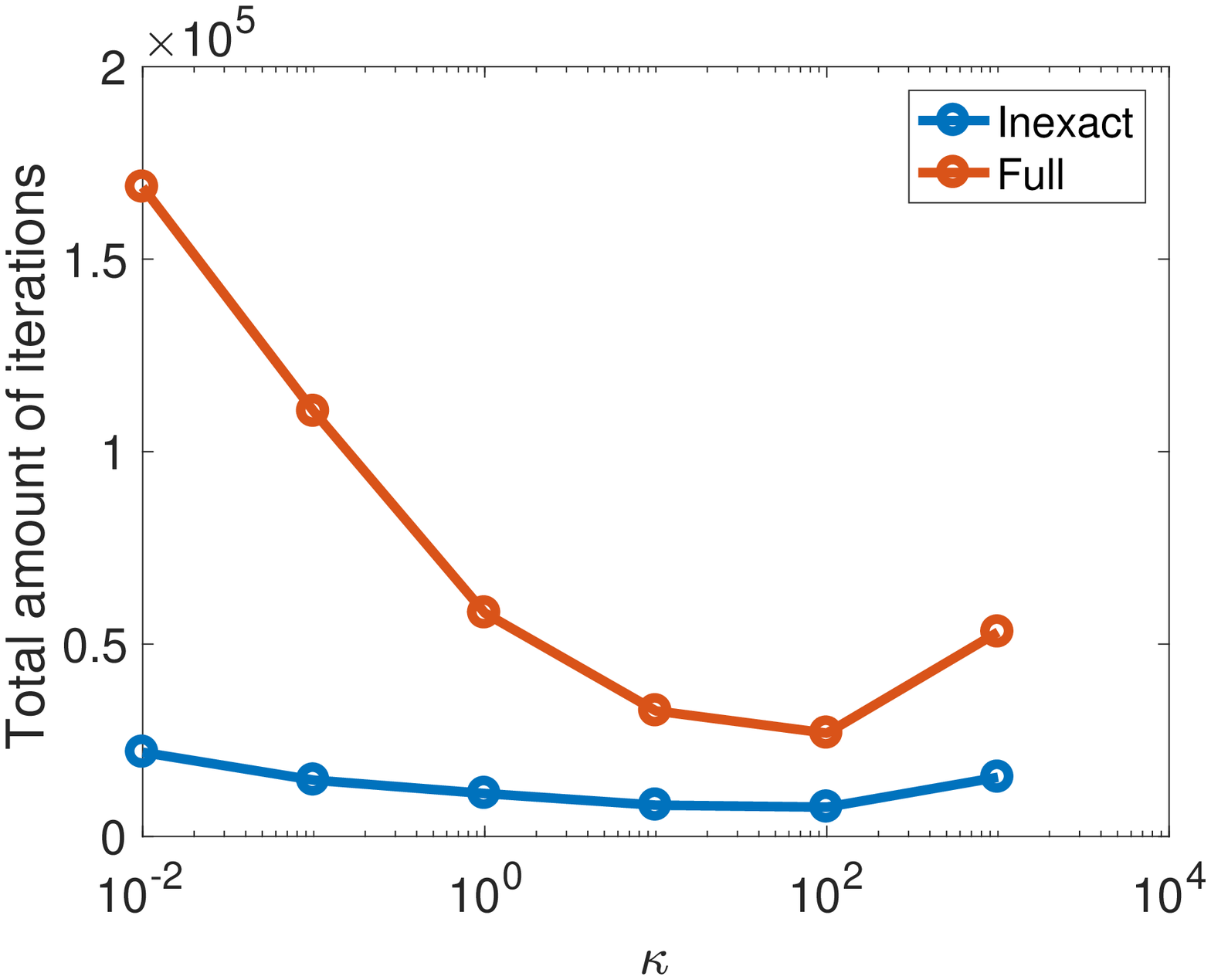} \\
		Gravity example & Tomography example \\
	\end{tabular}
	\caption{Comparison of the total number of iterations for SR3 and inexact SR3 as a function of $\kappa$. Note that the axes are on a log-log scale.}
	\label{fig:inexact_SR3}
\end{figure}
We see that the total number of iterations needed is greatly reduced by implementing the automated stopping criterion. Another important contribution is that the stopping criterion seems to mitigate the influence of $\kappa$ on the total amount of iterations. Figures \cref{fig:gravity-kappa} and \cref{fig:tomo-kappa} show some reconstructions for different values of $\kappa$.
\begin{figure}[!h]
\centering
		\begin{tabular}{cc}
			\includegraphics[width=0.5\linewidth]{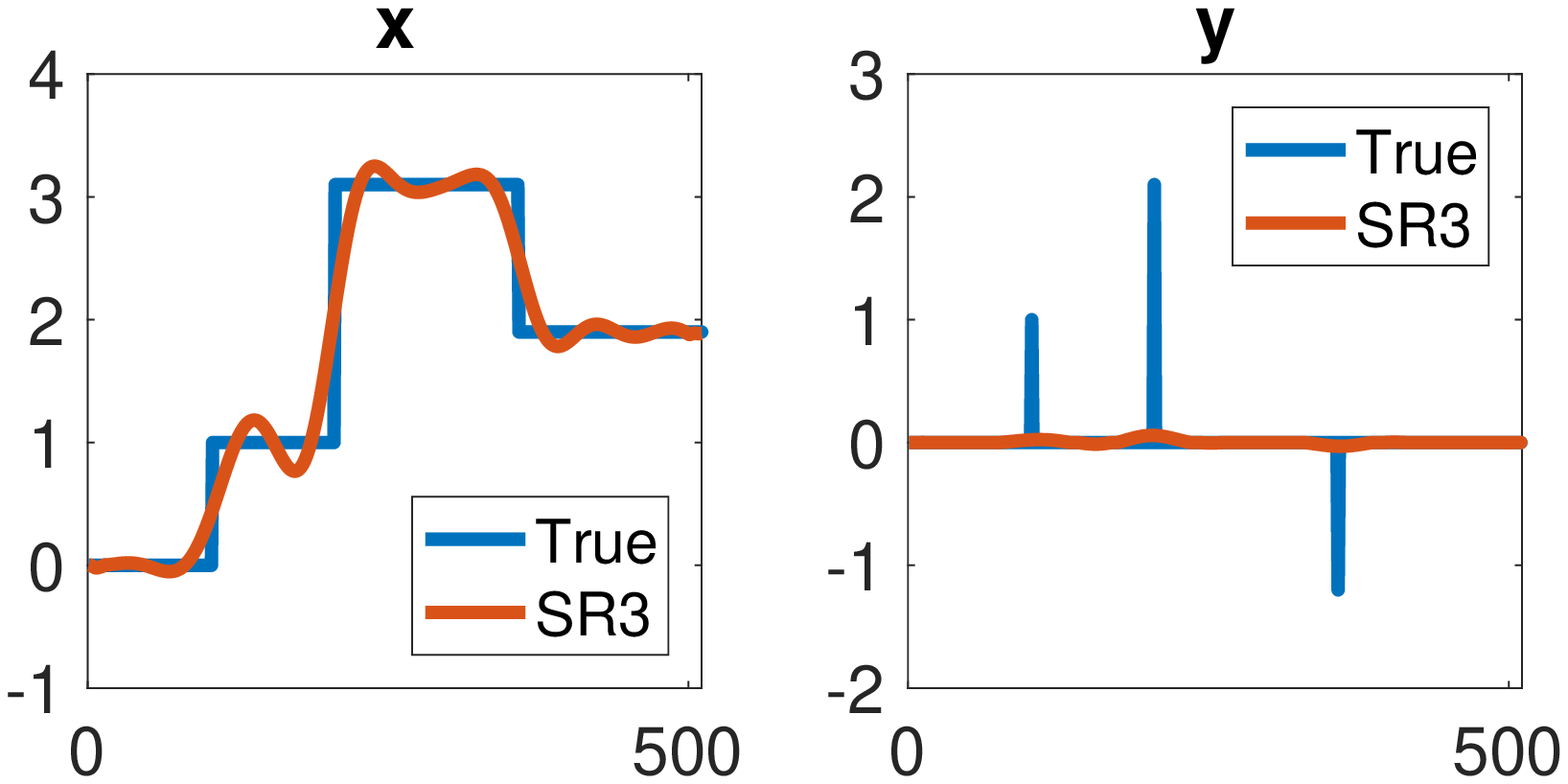} &
			\includegraphics[width=0.5\linewidth]{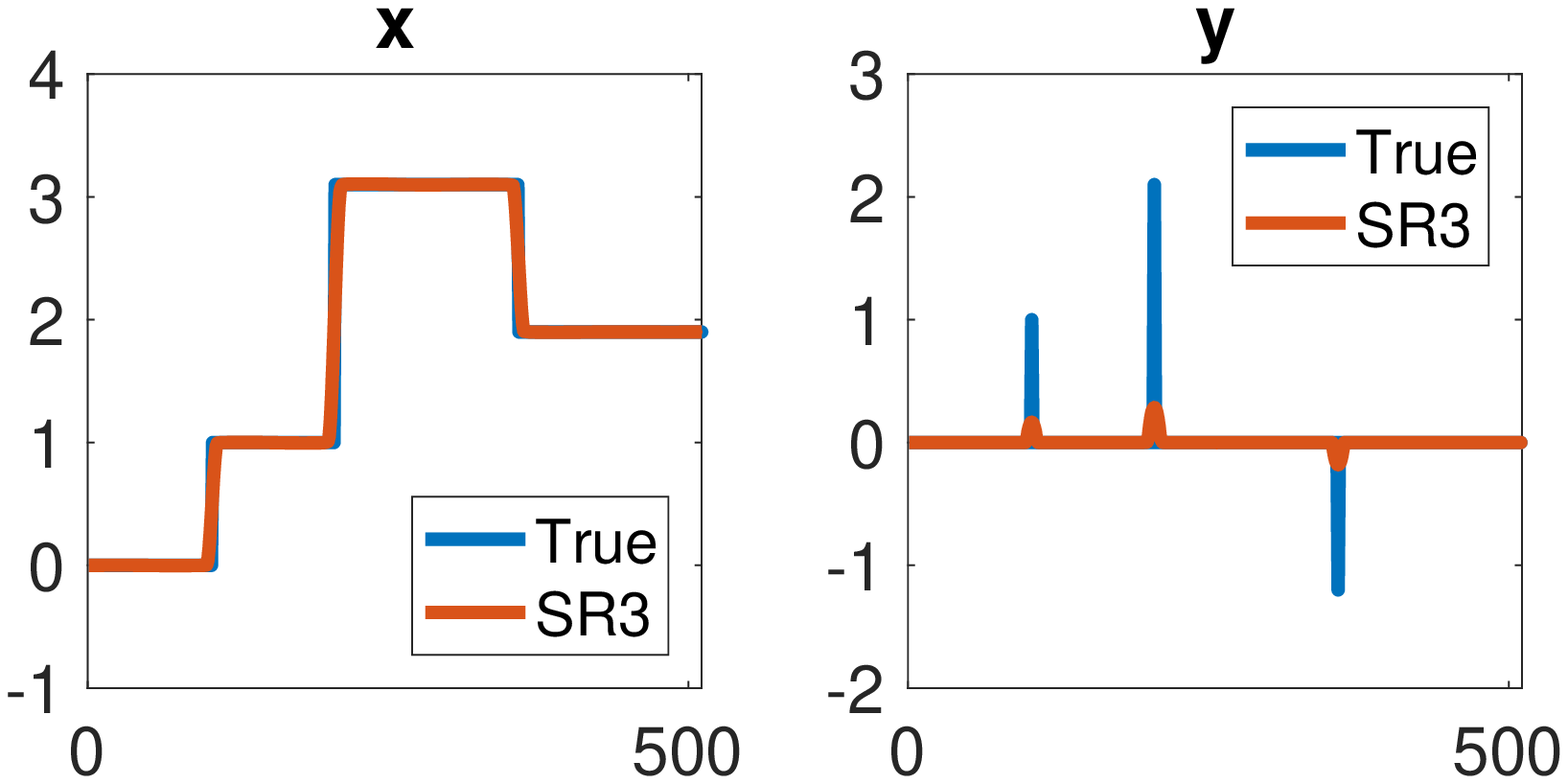} \\
			$\kappa = 10^{-6}$. & $\kappa = 10^{-2}$. \\
			\includegraphics[width=0.5\linewidth]{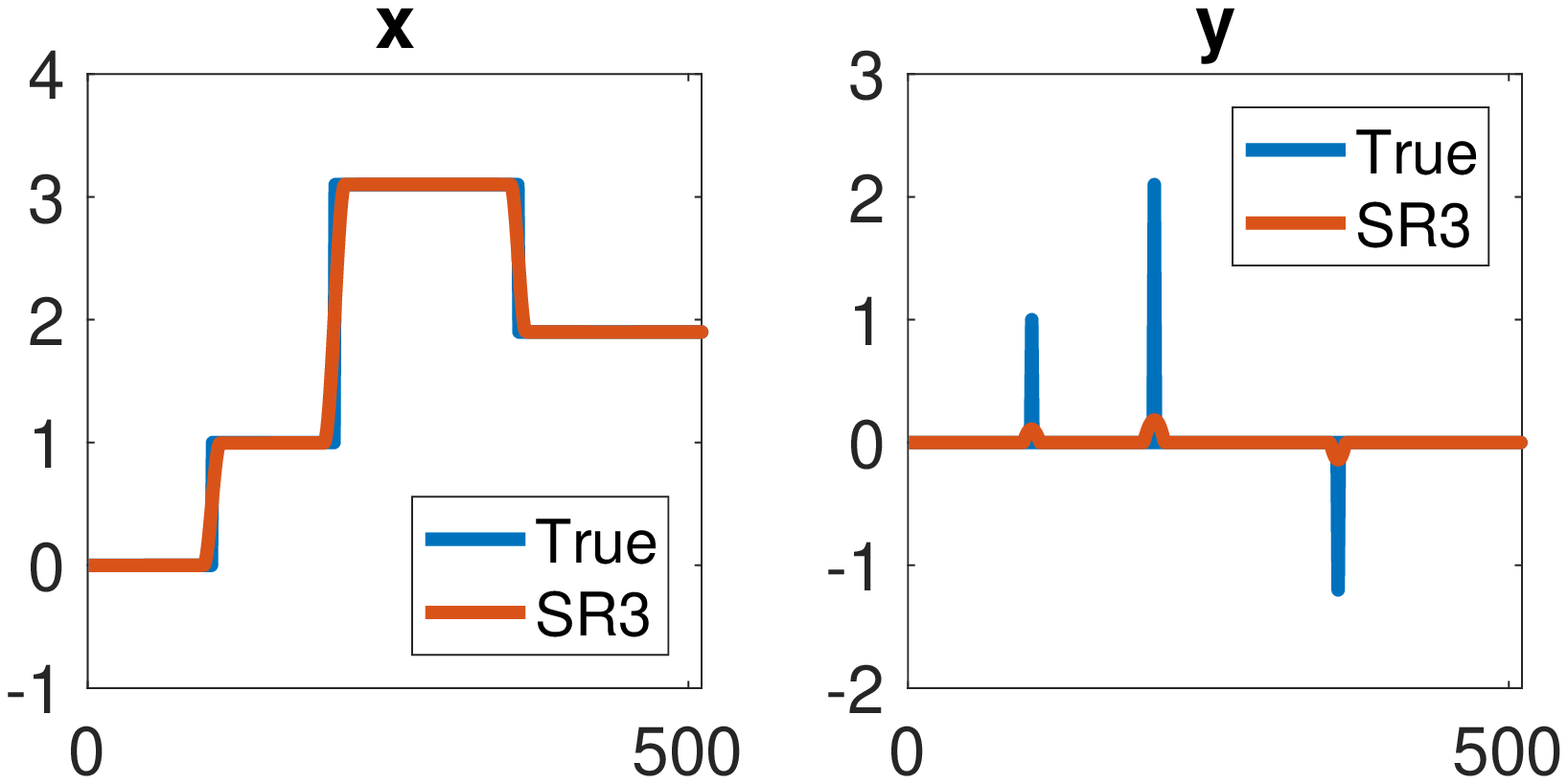} &
			\includegraphics[width=0.5\linewidth]{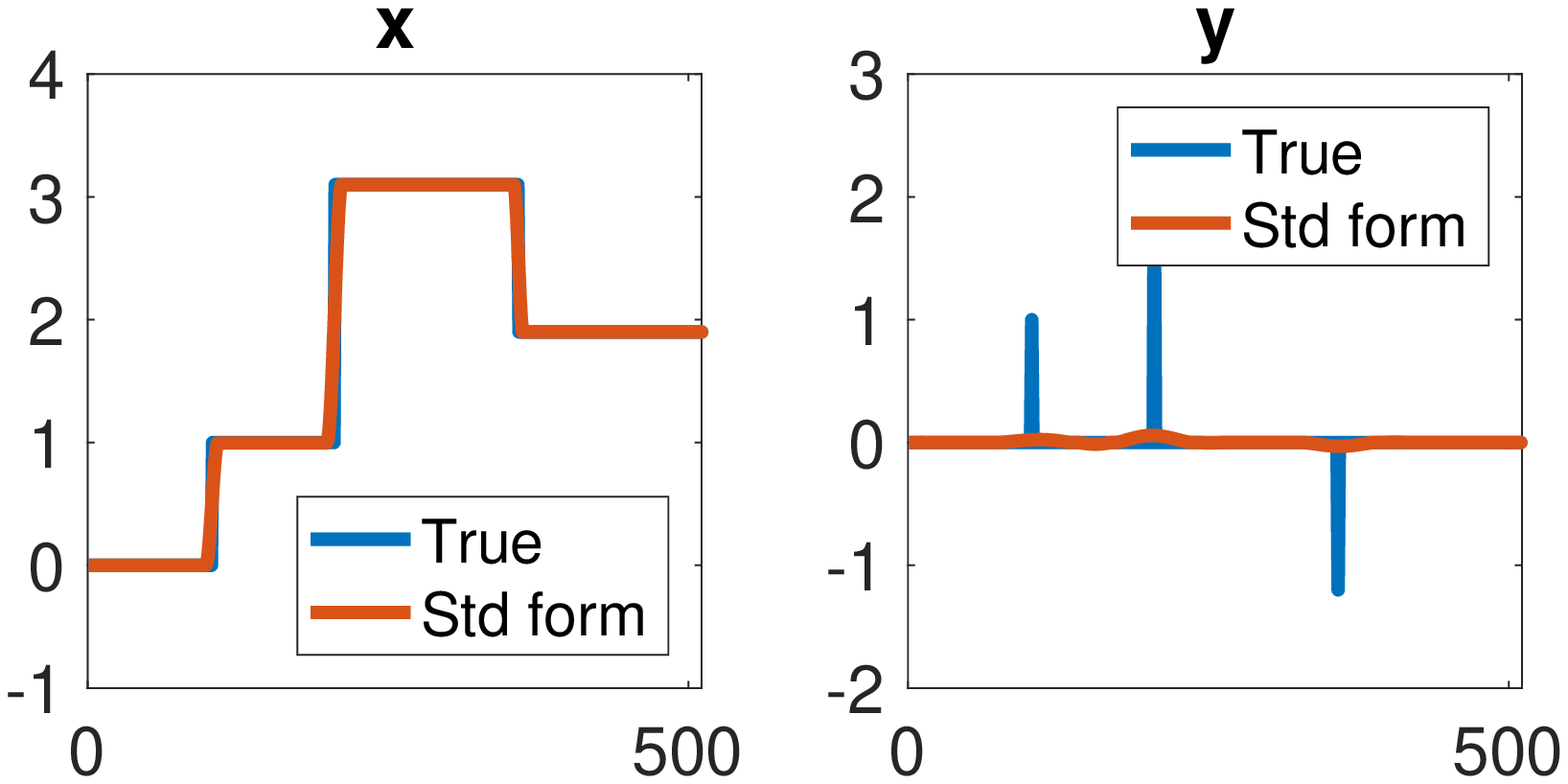} \\
			$\kappa = 10^{2}$. & Standard-form transformation. \\
		\end{tabular}
\caption{Solution to the gravity example for different $\kappa$. We have chosen $\lambda$ by hand to yield the best reconstruction. We show both $\bar{x}_{\kappa}$ and $\bar{y}_{\kappa}$.}
\label{fig:gravity-kappa}
\end{figure}
\begin{figure}[!h]
\centering
		\begin{tabular}{cc}
			\includegraphics[width=0.4\linewidth]{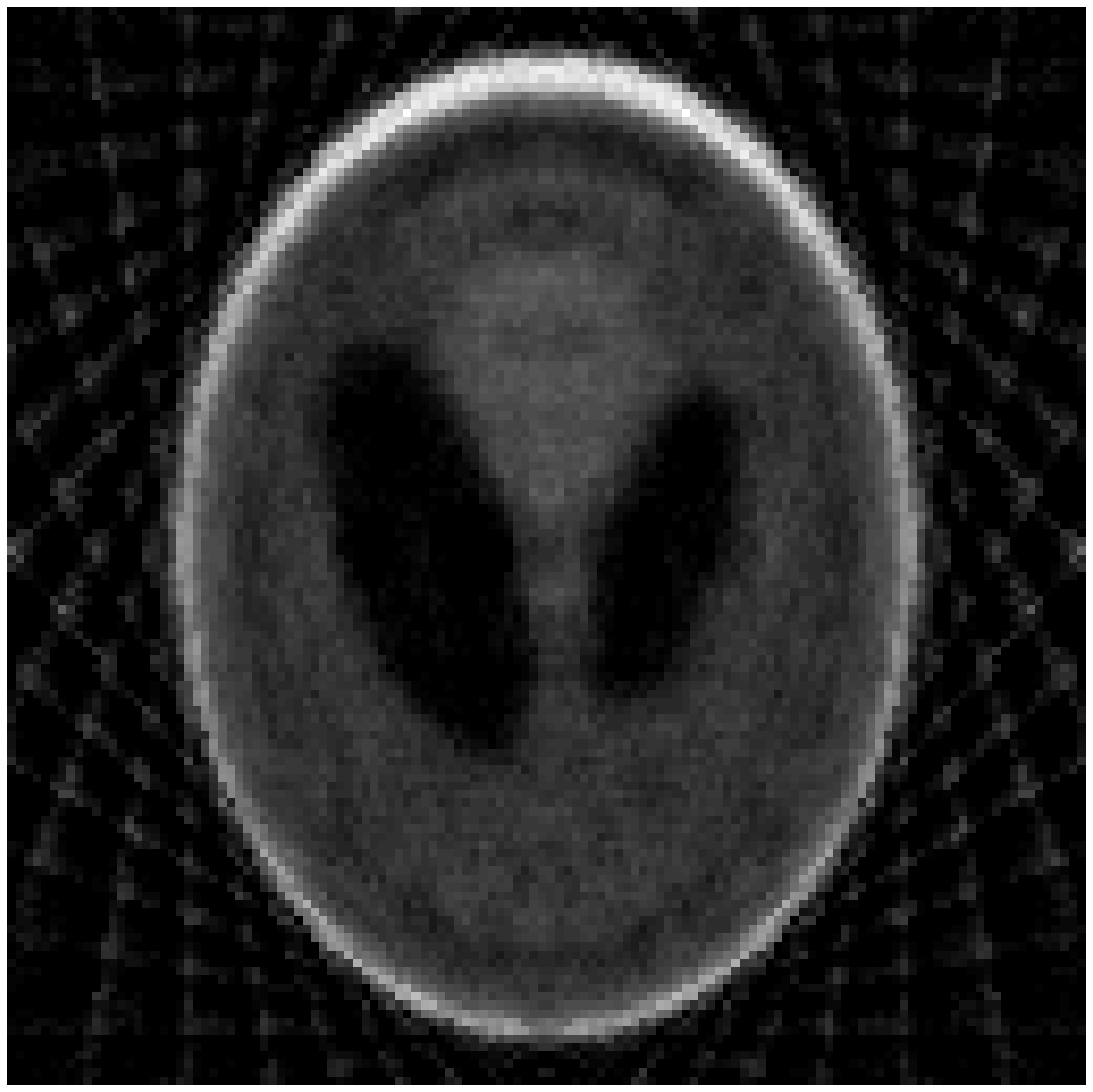} &
			\includegraphics[width=0.4\linewidth]{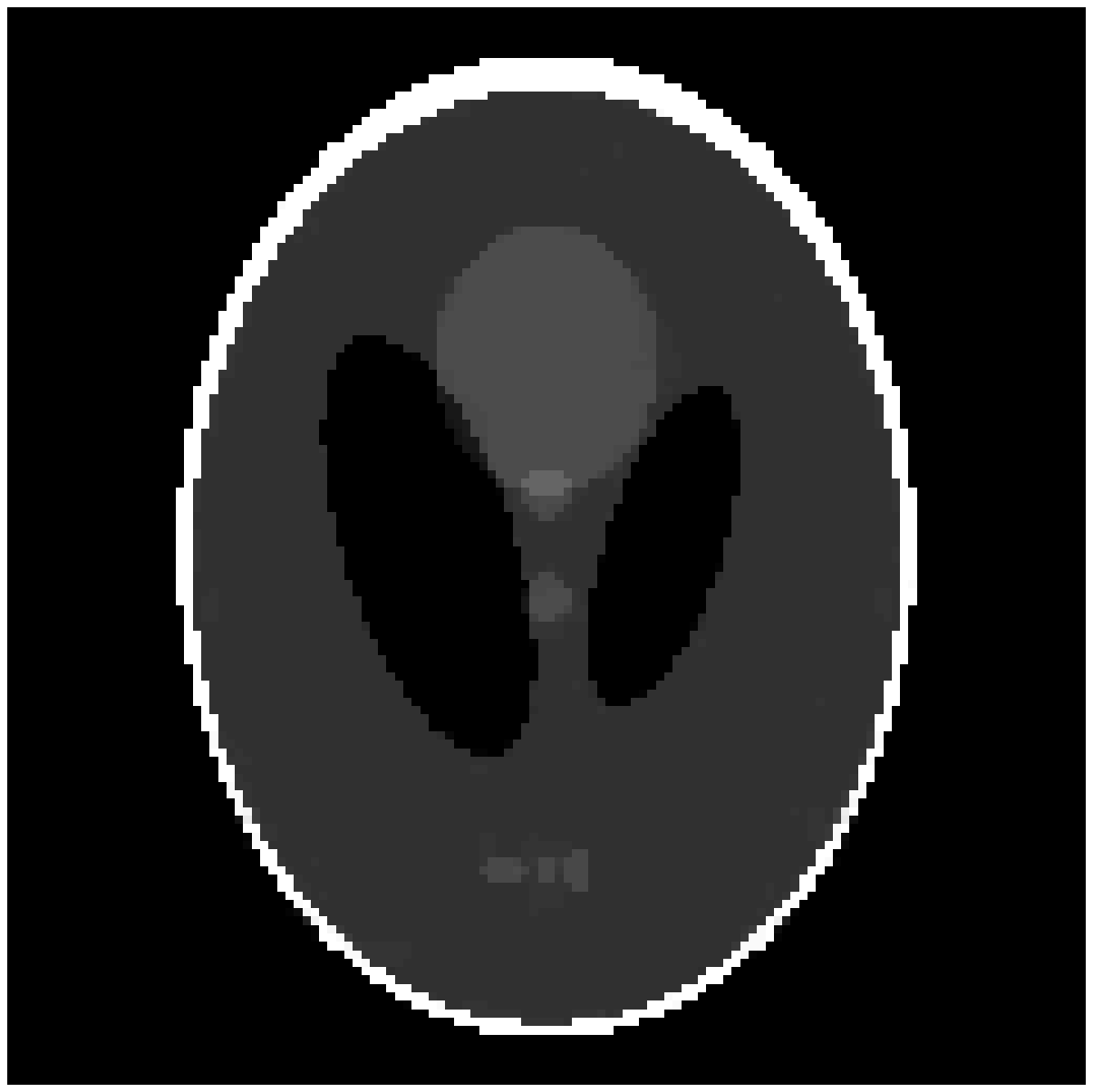} \\
			$\kappa = 10^{-8}$. & $\kappa = 10^{-2}$. \\
			\includegraphics[width=0.4\linewidth]{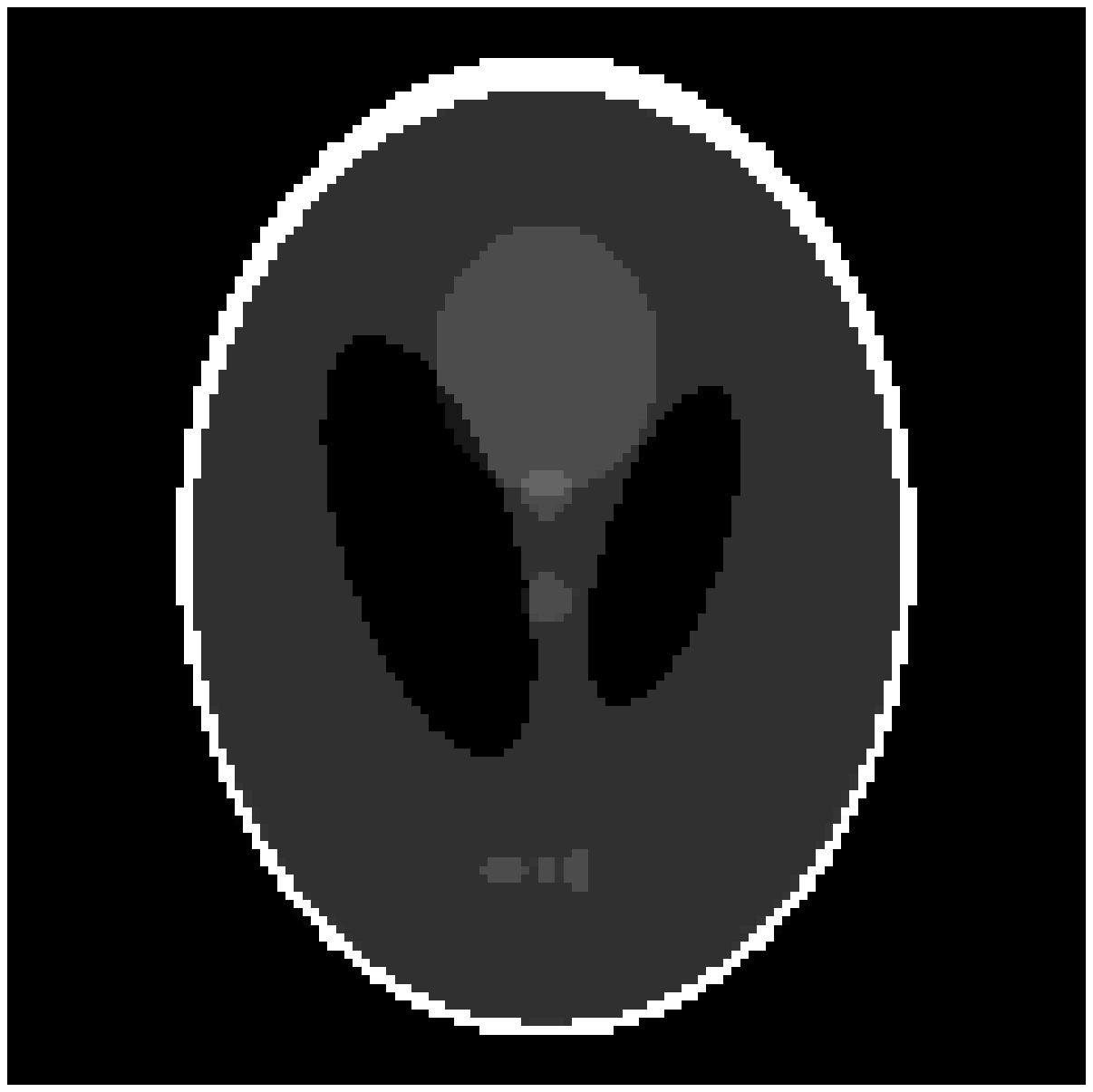} &
			\includegraphics[width=0.4\linewidth]{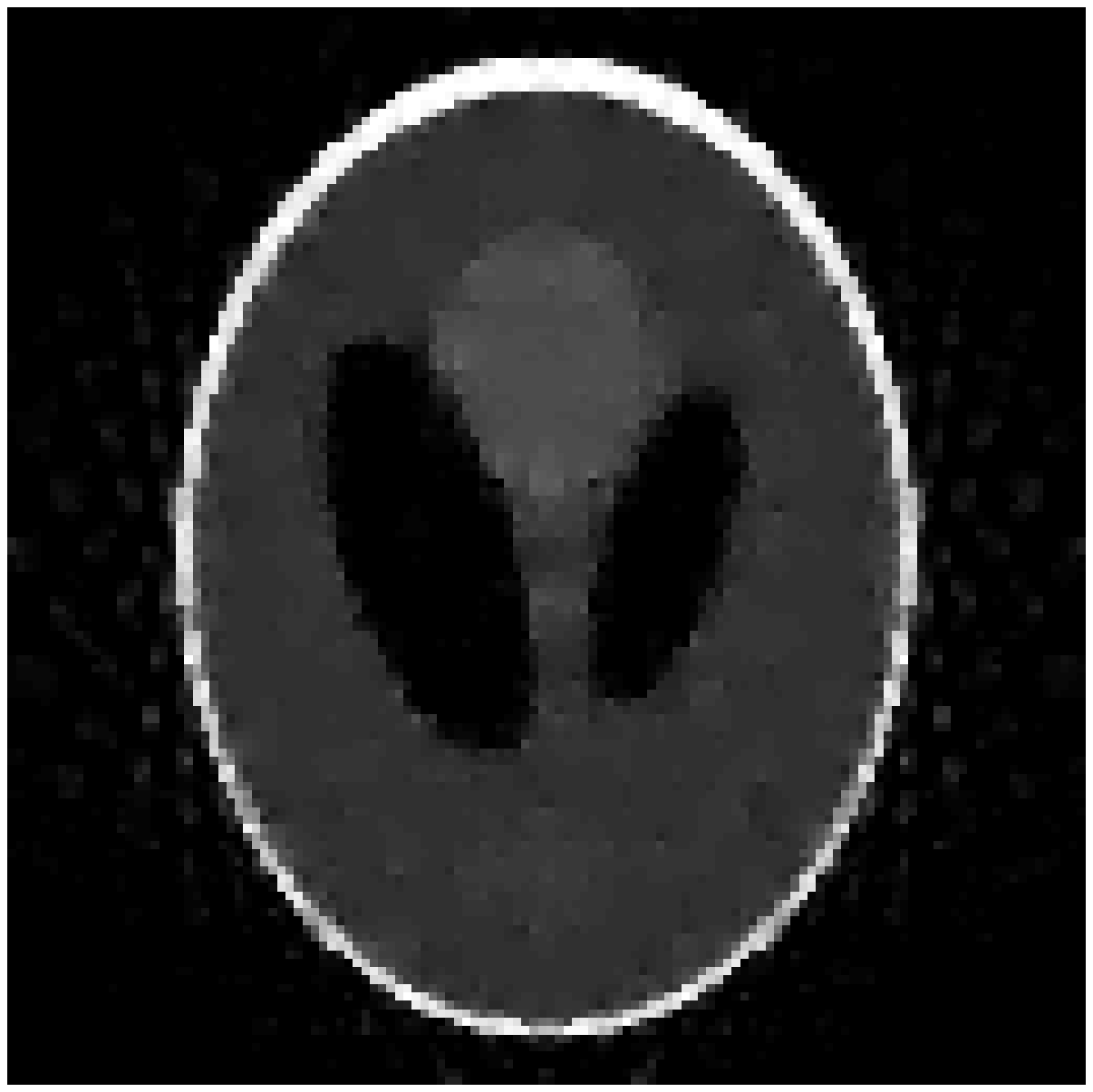} \\
			$\kappa = 10^{0}$. & Standard-form transformation. \\
		\end{tabular}
\caption{Solution to the tomography example for different $\kappa$. We have chosen $\lambda$ by hand to yield the best reconstruction.}
\label{fig:tomo-kappa}
\end{figure}

\section{Conclusion and outlook}
\label{sec:conclusion}
In this paper we have analyzed the method SR3 which was introduced in \cite{Zheng2019}. We have extended theorem 1 from \cite{Zheng2019}   about the singular values of $F_{\kappa}$ to the general form case. We have shown that SR3, as $\kappa\to\infty$, implicitly applies a standard-form transformation, and that for finite $\kappa > 0$, the singular values of $F_{\kappa}$ are related to the standard-form transformed operator. \\
In \cref{sec:value_function} we have shown that the distance between the Pareto curve of the original problem and the Pareto curve of the relaxed problem is of $\mathcal{O}(1/\kappa^2)$ plus the norm of the gradient, which depends on $\kappa$. \\
In \cref{sec:implementation} we have presented our implementation of the inexact SR3 algorithm, where we have proposed an automated stopping criterion for the inner iterations. \\
In our numerical experiments in \cref{sec:experiments} we have compared the SR3 algorithm for two example problems with very different spectra. The gravity example is a severely ill-posed problem and we have shown, numerically, that the convergence of inner iterations is not affected much by $\kappa$, but the convergence of the outer iteration is. For the tomography example we saw a trade-off: as $\kappa$ decreases the outer iterations converge rapidly, but the number of inner iterations is large. We have shown that our automated stopping criterion greatly reduces the number of iterations needed. \\
For future research it would be interesting to further investigate the relation between the Pareto curve of the original problem and of the relaxed problem. Specifically, it would be great if we could prove that the corner of the curves are in the same place, something that we have only been able to show qualitatively through \cref{thm:valuefunction}. This would lead to automatic selection of the regularization parameter $\lambda$. \\
Another interesting topic of research is the selection of $\kappa$. As we have seen in our experiments, the choice of $\kappa$ strongly influences the number of iterations needed for SR3, although this is largely mitigated by the inexact stopping criterion. The relation between the tolerance for the stopping criterion and $\kappa$ should also be further investigated.

\section{Acknowledgements}
The authors would like to thank Dr. Michiel Hochstenbach and Dr. Ajinkya Kadu for fruitful discussions.

\clearpage

\bibliographystyle{siamplain}
\nocite{*}
\bibliography{mybib.bib}
\end{document}